\documentclass{amsart}

\usepackage{amsmath}
\usepackage{amsthm,amsfonts,amssymb,mathrsfs}
\usepackage{epic,eepic}
\usepackage{yfonts}
\usepackage{paralist,enumerate}
\usepackage[all]{xy}

\newtheorem{theorem}{Theorem}
\newtheorem{proposition}[theorem]{Proposition}
\newtheorem{lemma}[theorem]{Lemma}
\newtheorem{corollary}[theorem]{Corollary}
\newtheorem{conjecture}[theorem]{Conjecture}

\newtheorem*{Brunn-Minkowski}{Brunn-Minkowski Inequality}
\newtheorem*{proposition*}{Proposition}
\newtheorem*{conjecture*}{Conjecture}
\newtheorem*{question}{Question}

\theoremstyle{definition}
\newtheorem{definition}[theorem]{Definition}

\theoremstyle{remark}
\newtheorem{remark}[theorem]{Remark}
\newtheorem{example}[theorem]{Example}

\usepackage{ifpdf}
\ifpdf
  \usepackage{graphicx}
  \usepackage{xcolor}
\else
  \usepackage[dvipdfmx]{graphicx}
  \usepackage[dvipdfmx]{xcolor}
\fi

\addtolength{\hoffset}{-0.8cm}
\addtolength{\textwidth}{1.0cm}

\begin{document}

\title[Projective hypersurfaces and chromatic polynomial of graphs]{Milnor numbers of projective hypersurfaces \\ and the chromatic polynomial of graphs}
\author{June Huh}
\address{Department of Mathematics \\ University of Illinois\\
  Urbana \\ IL 61801\\ USA}
\email{huh14@illinois.edu}
\curraddr{Department of Mathematics \\ University of Michigan \\
Ann Arbor \\ MI 48109 \\ USA}
\email{junehuh@umich.edu}
\keywords{Chern-Schwartz-MacPherson class, characteristic polynomial, chromatic polynomial, Milnor number, Okounkov body}
\subjclass[2010]{Primary 14B05, 05B35.}
\thanks{The author acknowledges support from National Science Foundation grant DMS 0838434 ``EMSW21-MCTP: Research Experience for Graduate Students''.}
%\begin{abstract}
%The chromatic polynomial $\chi_G(q)$ of a graph $G$ counts the number of proper colorings of $G$. We give an affirmative answer to the conjecture of Read \cite{Read} and Rota-Heron-Welsh \cite{Rota, Heron, Welsh} that the absolute values of the coefficients of the chromatic polynomial form a log-concave sequence. The proof is obtained by identifying $\chi_G(q)$ with a sequence of numerical invariants of a projective hypersurface analogous to the Milnor number of a local analytic hypersurface. As a by-product of our approach, we obtain an analogue of Kouchnirenko's theorem relating the Milnor number with the Newton polytope; we also characterize homology classes of $\mathbb{P}^n \times \mathbb{P}^m$ corresponding to subvarieties and answer a question posed by Trung-Verma in \cite{Trung-Verma}.
%\end{abstract}

\maketitle

\section{Introduction}

George Birkhoff introduced a function $\chi_G(q)$, defined for all positive integers $q$ and a finite graph $G$, which counts the number of proper colorings of $G$ with $q$ colors. As it turns out, $\chi_G(q)$ is a polynomial in $q$ with integer coefficients, called the \emph{chromatic polynomial} of $G$. %Although Birkhoff's hope of using the chromatic polynomial $\chi_G(q)$ to prove the four-color theorem did not bear fruit, $\chi_G(q)$ remains as an important invariant of $G$. 

Recall that a sequence $a_0,a_1,\ldots,a_n$ of real numbers is said to be \emph{unimodal} if for some $0 \le i \le n$,
\[
a_0 \le a_1 \le \cdots \le a_{i-1} \le a_i \ge a_{i+1} \ge \cdots \ge a_n,
\]
and is said to be \emph{log-concave} if for all $0 < i < n$,
\[
a_{i-1}a_{i+1}\le a_i^2.
\]
We say that the sequence has \emph{no internal zeros} if the indices of the nonzero elements are consecutive integers. Then in fact a nonnegative log-concave sequence with no internal zeros is unimodal. Like many other combinatorial invariants, the chromatic polynomial shows a surprising log-concavity property. See \cite{Aigner,Brenti,Stanley,Stanley2} for a survey of known results and open problems on log-concave and unimodal sequences arising in algebra, combinatorics, and geometry. The goal of this paper is to answer the following question concerning the coefficients of chromatic polynomials.

\begin{conjecture}\label{Read}
Let $\chi_G(q)=a_n q^n - a_{n-1} q^{n-1} + \cdots + (-1)^n a_0$ be the chromatic polynomial of a graph $G$. Then the sequence $a_0,a_1,\ldots,a_n$ is log-concave.
\end{conjecture}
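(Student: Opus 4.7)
The plan is to realize the unsigned coefficients $a_i$ as algebraic-geometric invariants of a projective hypersurface attached to $G$---specifically as projective degrees (equivalently Milnor numbers) of a polar map---and to deduce log-concavity from the Khovanskii--Teissier inequality applied on the graph of that map.

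I would first reduce to $G$ connected. Since $\chi_{G_1 \sqcup G_2} = \chi_{G_1}\chi_{G_2}$, and since the convolution of two nonnegative log-concave sequences with no internal zeros is again of the same type, it suffices to handle connected $G$ on $n+1$ vertices (one checks that the trailing $a_0 = 0$ is compatible with the log-concavity inequalities). Next, enumerate the edges $e_1, \ldots, e_d$ of $G$ and form the linear forms $\ell_k = x_i - x_j \in \mathbb{C}[x_0, \ldots, x_n]$ for $e_k = \{i,j\}$. The graphic arrangement is the hypersurface $V(f_G) \subset \mathbb{P}^n$ defined by $f_G = \ell_1 \cdots \ell_d$, and the polar map is
$$\mathrm{grad}(f_G) \colon \mathbb{P}^n \dashrightarrow \mathbb{P}^n, \qquad [x_0 : \cdots : x_n] \longmapsto \bigl[\partial_0 f_G : \cdots : \partial_n f_G\bigr].$$
Let $\mu^{(i)}$ denote its $i$-th projective degree, i.e.\ the number of intersection points of a generic $i$-plane with the preimage of a generic $(n-i)$-plane under $\mathrm{grad}(f_G)$.

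The heart of the argument is to establish the identity
$$a_i \ = \ \mu^{(i)} \qquad (0 \le i \le n),$$
turning the combinatorial question into one about the geometry of the polar map. Both sides encode the Chern--Schwartz--MacPherson class of the arrangement complement $U = \mathbb{P}^n \setminus V(f_G)$. On the combinatorial side, the degrees of $c_{\mathrm{SM}}(\mathbf{1}_U)$ are determined by the Möbius function of the lattice of flats of the graphic matroid, whose values recover the $a_i$ via Whitney's broken-circuit theorem. On the algebraic side, resolving the indeterminacy of $\mathrm{grad}(f_G)$ along the singular locus of $V(f_G)$ and applying MacPherson's functoriality for $c_{\mathrm{SM}}$ expresses these same degrees in terms of intersection numbers on the graph of the polar map, which are precisely the $\mu^{(i)}$.

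Once the identification is in place, log-concavity follows from Khovanskii--Teissier. Writing $\Gamma \subset \mathbb{P}^n \times \mathbb{P}^n$ for (a resolution of) the graph of $\mathrm{grad}(f_G)$ and $H_1, H_2$ for the pullbacks of the hyperplane class under the two projections, $\mu^{(i)} = H_1^{n-i} \cdot H_2^{i}$ is a mixed intersection number of two nef divisors, hence
$$\mu^{(i-1)} \cdot \mu^{(i+1)} \ \le \ \bigl(\mu^{(i)}\bigr)^2,$$
a consequence of the Hodge index theorem on a generic surface section of $\Gamma$. The principal obstacle is the identification $a_i = \mu^{(i)}$ in the third step: the polar map has highly non-isolated indeterminacy along $\mathrm{Sing}(V(f_G))$, and matching the combinatorial characteristic-polynomial data with the Chern-class contributions of the exceptional divisors on a log resolution is the most delicate part of the argument. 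A secondary but nontrivial issue is verifying that the relevant pullback classes on $\Gamma$ are genuinely nef so that Khovanskii--Teissier applies without modification.
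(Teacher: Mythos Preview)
Your overall architecture matches the paper's: realize the relevant numbers as projective degrees $\mu^i$ of the gradient map of the defining polynomial of a hyperplane arrangement, then invoke the Khovanskii--Teissier inequality on the graph $\Gamma \subset \mathbb{P}^n \times \mathbb{P}^n$ to get log-concavity. The nef-ness issue you flag as secondary is in fact a non-issue: $H_1,H_2$ are pullbacks of the hyperplane class along the two projections $\Gamma \to \mathbb{P}^n$, hence automatically nef.

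There is, however, a genuine error in your identification step. The identity $a_i = \mu^{(i)}$ is false. With your setup ($G$ connected on $n+1$ vertices, $f_G = \prod_k (x_i - x_j) \in \mathbb{C}[x_0,\dots,x_n]$), every linear form $x_i - x_j$ vanishes on the diagonal, so $\sum_i \partial_i f_G = 0$ and the gradient map lands in a hyperplane; in particular $\mu^{(n)} = 0$, while $\mu^{(0)} = 1$ always. Already for $G = K_3$ one has $(a_0,a_1,a_2,a_3) = (0,2,3,1)$ but $(\mu^{(0)},\mu^{(1)},\mu^{(2)}) = (1,2,0)$, and no reindexing makes these agree. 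What is true (and what the paper proves, via the Dimca--Papadima theorem together with Orlik--Solomon, or alternatively via Aluffi's CSM formula plus deletion--restriction) is that the $\mu^{(i)}$ are the unsigned coefficients of the \emph{reduced} polynomial
\[
\chi_G(q)/(q-1) \;=\; \sum_{i=0}^n (-1)^i \mu^{(i)} q^{n-i},
\]
so that $a_j = \mu^{(n+1-j)} + \mu^{(n-j)}$. Log-concavity of $(a_j)$ then follows from log-concavity of $(\mu^{(i)})$ because convolution with the sequence $(1,1)$ preserves log-concavity (with no internal zeros). This extra convolution step is exactly how the paper closes the argument; your proposal needs it too.
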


Read \cite{Read} conjectured in 1968 that the above sequence is unimodal. Soon after Rota, Heron, and Welsh formulated the conjecture in a more general context of matroids \cite{Rota, Heron, Welsh}.  Let $M$ be a matroid and $\mathscr{L}$ be the lattice of flats of $M$ with the minimum $\hat{0}$. The \emph{characteristic polynomial} of $M$ is defined to be
\[
\chi_M(q)=\sum_{x \in \mathscr{L}} \mu(\hat{0},x) q^{\text{rank}(M)-\text{rank}(x)},
\]
where $\mu$ is the \emph{M\"obius function} of $\mathscr{L}$. We refer to \cite{Oxley, Welsh} for general background on matroids.

\begin{conjecture}\label{Rota-Welsh}
Let $\chi_M(q)=a_n q^n - a_{n-1} q^{n-1} + \cdots + (-1)^n a_0$ be the characteristic polynomial of a matroid $M$. Then the sequence $a_0,a_1,\ldots,a_n$ is log-concave.
\end{conjecture}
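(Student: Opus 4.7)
The plan is to realize the absolute values $a_0,\ldots,a_n$ as a sequence of intersection numbers on a smooth projective variety built from a realization of $M$, and then to deduce log-concavity from a Hodge-theoretic (Khovanskii-Teissier) inequality. This route is most natural when $M$ is representable by a hyperplane arrangement over $\mathbb{C}$; graphic matroids are representable over every field via the signed incidence matrix, so Conjecture \ref{Read} would fall out as a special case of the representable version of Conjecture \ref{Rota-Welsh}.

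First, given a realization of $M$ by linear forms $\ell_0,\ldots,\ell_m$ on $\mathbb{C}^{n+1}$, I would form the rational map
\[
\phi:\mathbb{P}^n \dashrightarrow \mathbb{P}^m,\qquad x \longmapsto \bigl(\ell_0(x):\cdots:\ell_m(x)\bigr),
\]
and let $X \subseteq \mathbb{P}^n\times\mathbb{P}^m$ be the closure of its graph, with hyperplane classes $\alpha,\beta$ pulled back from the two factors. The bidegrees
\[
d_i := [X]\cdot \alpha^{n-i}\cdot \beta^{i},\qquad 0\le i\le n,
\]
form a sequence of nonnegative integers supported on an interval of consecutive indices. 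The key identification to carry out is $d_i = a_i$ (up to a harmless shift). I would prove this by computing the Chern-Schwartz-MacPherson class of the arrangement complement $\mathbb{P}^n\setminus\bigcup H_i$ and pushing it forward to $X$, matching it against the M\"obius sum that defines the characteristic polynomial; as a reality check one can also extract each $d_i$ as a sum of Milnor numbers of a generic linear section of the hypersurface $\ell_0\cdots\ell_m = 0$ and compare against the deletion-contraction recursion.

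Once the identification $d_i = a_i$ is in place, log-concavity is a direct consequence of the Khovanskii-Teissier inequality: cutting $X$ with $n-1$ very ample divisors drawn from $\{\alpha,\beta\}$ produces an irreducible projective surface on which the Hodge index theorem forces
\[
d_i^{\,2} \;\ge\; d_{i-1}\cdot d_{i+1},\qquad 0<i<n.
\]
Equivalently, this is the Brunn-Minkowski inequality applied to the Okounkov bodies of the big line bundles $\alpha^{n-i}\otimes\beta^{i}$ on $X$, which is the viewpoint suggested by the keyword list.

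The principal obstacle is the identification step $d_i=a_i$, and in particular the control of boundary contributions: the arrangement complement is only quasi-projective, so the CSM pushforward picks up corrections from the closed strata of the intersection stratification, and these must cancel in precisely the alternating pattern prescribed by $\mu(\hat{0},x)$. A secondary limitation is structural: this entire strategy rests on the existence of an algebraic realization of $M$, so the approach as outlined reaches only the representable case. Conjecture \ref{Rota-Welsh} for non-representable matroids would demand a purely combinatorial Chow-ring substitute for $X$ on which an analogue of the Khovanskii-Teissier inequality can be proved, which falls outside the geometric framework proposed here.
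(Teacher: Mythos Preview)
Your overall architecture---realize the $a_i$ as bidegrees of an irreducible subvariety of $\mathbb{P}^n\times\mathbb{P}^m$ and then invoke Khovanskii--Teissier---is exactly the strategy of the paper, and you correctly flag that it only reaches the representable case. But the specific rational map you propose is wrong, and this is not a cosmetic issue: the map
\[
\phi:\mathbb{P}^n\dashrightarrow\mathbb{P}^m,\qquad x\longmapsto\bigl(\ell_0(x):\cdots:\ell_m(x)\bigr)
\]
is \emph{linear}. When the $\ell_i$ span (as one may assume for a simple matroid), $\phi$ is a linear embedding with no base locus, its graph is a linearly embedded $\mathbb{P}^n$, and the bidegrees are $d_0=d_1=\cdots=d_n=1$. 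These are not the coefficients of the characteristic polynomial: already for the Boolean arrangement $\ell_i=z_i$ the reduced characteristic polynomial is $(q-1)^n$ with coefficients $\binom{n}{i}$. So the identification $d_i=a_i$ that you call ``the key identification to carry out'' is simply false for this $\phi$, and no amount of CSM bookkeeping will repair it.

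The paper's fix is to use the \emph{gradient} map of the defining polynomial $h=\prod_i\ell_i$,
\[
\mathrm{grad}(h):\mathbb{P}^n\dashrightarrow\mathbb{P}^n,\qquad p\longmapsto\Bigl(\tfrac{\partial h}{\partial z_0}(p):\cdots:\tfrac{\partial h}{\partial z_n}(p)\Bigr),
\]
whose projective degrees $\mu^i(h)$ are the mixed multiplicities $e_i(\mathfrak{m}\,|\,J_h)$. The substantive work (Theorem~\ref{A} via Dimca--Papadima, together with an Orlik--Solomon argument) shows that these $\mu^i(h)$ are the Betti numbers of the complement, hence the coefficients of $\chi_M(q)/(q-1)$; log-concavity then follows from Theorem~\ref{B}. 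A smaller point: in your Khovanskii--Teissier step you speak of cutting by ``very ample'' divisors from $\{\alpha,\beta\}$, but $\alpha$ and $\beta$ are only nef on the product; the paper handles this by a limit argument through the ample cone (Kleiman's theorem) together with Lemma~\ref{no internal zeros} to conclude that the limiting sequence has no internal zeros.
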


Recent work of Stanley \cite[Conjecture 3]{Stanley} \cite[Problem 25]{Stanley2} has renewed interest in the above conjectures. We will show in Corollary \ref{main} that Conjecture \ref{Rota-Welsh} is valid for matroids representable over a field of characteristic zero. %The approach of the present paper relies on the theory of singularities. 

\begin{theorem}
If $M$ is representable over a field of characteristic zero, then the coefficients of the characteristic polynomial of $M$ form a sign-alternating log-concave sequence of integers with no internal zeros.
\end{theorem}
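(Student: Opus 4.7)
The plan is to realize the absolute values of the coefficients of $\chi_M(q)$ as intersection numbers of nef divisor classes on a smooth projective variety built from a linear realization of $M$, and then invoke the Khovanskii--Teissier inequality to obtain log-concavity.

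After standard matroid-theoretic reductions I may assume $M$ is loopless and connected, so that $\chi_M(q) = (q-1)\bar\chi_M(q)$; log-concavity of the coefficients of $\chi_M$ will then follow from log-concavity of those of $\bar\chi_M$ via the classical fact that the convolution of two nonnegative log-concave sequences with no internal zeros is again of the same type. Fix a linear realization of $M$ over $k$: an embedding $\iota : \mathbb{P}(V) \hookrightarrow \mathbb{P}^N$, where $\dim V = n = \mathrm{rank}(M)$ and $N+1$ is the size of the ground set, such that the arrangement $\mathcal{A}$ cut on $\mathbb{P}(V)$ by the coordinate hyperplanes of $\mathbb{P}^N$ has matroid $M$. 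Compose $\iota$ with the standard Cremona involution $c : \mathbb{P}^N \dashrightarrow \mathbb{P}^N$, $[z_0:\cdots:z_N] \mapsto [z_0^{-1}:\cdots:z_N^{-1}]$, and resolve the indeterminacy of $c \circ \iota$ by iteratively blowing up the strata of $\mathcal{A}$ in order of increasing dimension. This produces a smooth projective $(n-1)$-fold $\widetilde{X}$ equipped with a morphism $\pi : \widetilde{X} \to \mathbb{P}(V) \times \mathbb{P}^N$; write $\alpha, \beta$ for the pullbacks to $\widetilde{X}$ of the two hyperplane classes, both of which are nef.

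The heart of the argument is the identity
\[
\bar a_i \;=\; \deg\!\bigl(\alpha^{\,n-1-i} \cdot \beta^{\,i}\bigr), \qquad 0 \le i \le n-1,
\]
where $\bar a_i$ denote the sign-normalized coefficients of $\bar\chi_M$. I would establish this by writing $\beta = \alpha + \sum_F c_F E_F$ on $\widetilde{X}$, with the sum running over exceptional divisors indexed by proper flats $F$ of $M$, expanding the intersection product, and matching the contribution from each flat with its M\"obius value $\mu(\hat 0, F)$; conceptually this says that $\pi$ pushes the Chern--Schwartz--MacPherson class of the complement of $\mathcal{A}$ to the characteristic polynomial of $M$. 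Once the identity is in hand, the Khovanskii--Teissier inequality applied to the nef pair $(\alpha,\beta)$ on the smooth projective $(n-1)$-fold $\widetilde{X}$ yields $\bar a_{i-1}\,\bar a_{i+1} \le \bar a_i^2$. Nonnegativity of the $\bar a_i$ is the standard nonnegativity of top intersection numbers of nef classes, and the no-internal-zeros property then follows automatically from the observation that a nonnegative log-concave sequence with nonzero leading term has no internal zeros.

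The main obstacle is the intersection-theoretic identification displayed above: keeping track of the combinatorics of the iterated blow-ups and recognizing the resulting sum over flats as the M\"obius sum defining $\bar\chi_M$. The cleanest approach is to first carry out the comparison at the level of Chern--Schwartz--MacPherson classes of the stratified complement $\mathbb{P}(V) \setminus \bigcup \mathcal{A}$ and only then specialize to projective bidegrees on $\widetilde{X}$; verifying that the wonderful model chosen is simultaneously adapted to the embedding $\iota$ and to the Cremona involution is the technical core of the argument.
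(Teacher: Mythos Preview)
Your strategy is sound and ends at the same place---the Khovanskii--Teissier inequality applied to two nef classes on an irreducible projective variety---but the route is genuinely different from the paper's. Here the variety is the graph in $\mathbb{P}^n\times\mathbb{P}^n$ of the \emph{gradient map} of $h=\prod_i \ell_i$, and the identification of its bidegrees $\mu^i(h)$ with the coefficients $\bar a_i$ is \emph{topological}: Theorem~\ref{A} (resting on the Dimca--Papadima theorem) shows that $\mu^i(h)$ counts $i$-cells in a CW model of the arrangement complement, and Corollary~\ref{formula} (resting on Orlik--Solomon) matches these cell counts with the Betti numbers and hence with the $\bar a_i$. This forces the argument through $\mathbb{C}$, and the extension to an arbitrary characteristic-zero field is then made via the completeness of $\mathrm{ACF}_0$. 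Your construction instead resolves the restricted Cremona map on a wonderful model and proposes to recover the $\bar a_i$ by direct intersection theory on $\widetilde X$, matching exceptional contributions to M\"obius values flat by flat. That route avoids both the topological input and the model-theoretic detour, and in principle works uniformly over any field; but the identification you defer is exactly the substantive step, and nothing in this paper supplies it---you would be writing a different proof, closer in spirit to the subsequent argument of Huh and Katz.

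One genuine slip: your justification of ``no internal zeros'' is false as stated. The sequence $(1,0,0,1)$ is nonnegative, log-concave, and has nonzero leading term, yet has an internal zero. What actually works (and what the paper does, combining Lemma~\ref{no internal zeros} with Kleiman's theorem) is to note that the $\bar a_i$ are limits of \emph{strictly positive} log-concave sequences---on your smooth projective $\widetilde X$ one perturbs the nef classes $\alpha,\beta$ to ample $\mathbb{R}$-classes, whose intersection numbers are positive---and such limits can have no internal zeros.
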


If $M$ is the cycle matroid of a simple graph $G$, then
$\chi_G(q) = q^c \chi_M(q)$,
where $c$ is the number of connected components of the graph. Since graphic matroids are representable over every field,  this implies the validity of Conjecture \ref{Read}. The approach of the present paper can be viewed as following two of Rota's ideas \cite{Kung, StanleyRota}: first, the idea that the values of the M\"obius function should be interpreted as an Euler characteristic; second, the idea that the log-concavity of such quantities should come from their relation with quermassintegrals, or more generally, mixed volumes of convex bodies.

One of the most important numerical invariants of a germ of an analytic function $f : \mathbb{C}^n \to \mathbb{C}$ with an isolated singularity at the origin is the sequence $\big\{\mu^i(f)\big\}_{i=0}^n$ introduced by Teissier \cite{Teissier}. Algebraically, writing $J_f$ for the ideal generated by the partial derivatives of $f$, the sequence $\big\{\mu^i(f)\big\}_{i=0}^n$ is defined by saying that $\dim_\mathbb{C} \mathbb{C}\{x_1,\ldots,x_n\}/\mathfrak{m}^u J_f^v$ is equal to a polynomial
\[
\frac{\mu^0(f)}{n!} u^{n} + \cdots + \frac{\mu^i(f)}{(n-i)i!} u^{n-i}v^i+ \cdots + \frac{\mu^{n}(f)}{n!} v^{n} + \text{(lower degree terms)}
\]
for large enough $u$ and $v$. Geometrically, $\mu^i(f)$ is the Milnor number of $f|_H$, where $H$ is a general $i$-dimensional plane passing through the origin of $\mathbb{C}^n$. Like any other mixed multiplicities of a pair of $\mathfrak{m}$-primary ideals in a local ring, $\big\{\mu^i(f)\big\}_{i=0}^n$ form a \emph{log-convex} sequence \cite[Example 3]{Teissier2}.

Let $h$ be any nonconstant homogeneous polynomial in $\mathbb{C}[z_0,\ldots,z_n]$. In analogy with \cite{Teissier}, we define a sequence $\big\{\mu^i(h)\big\}_{i=0}^n$ by saying that $\dim_\mathbb{C} \mathfrak{m}^uJ_h^v/\mathfrak{m}^{u+1}J_h^v$ is equal to a polynomial
\[
\frac{\mu^0(h)}{n!} u^{n} + \cdots + \frac{\mu^i(h)}{(n-i)i!} u^{n-i}v^i+ \cdots + \frac{\mu^{n}(h)}{n!} v^{n} + \text{(lower degree terms)}
\]
for large enough $u$ and $v$. Theorem \ref{A} identifies $\mu^i(h)$ with the number of $i$-cells in a CW model of the complement $D(h)$ of the hypersurface in $\mathbb{P}^n$ defined by $h$. This CW model of $D(h)$ is interesting in view of the following facts:
\begin{enumerate}[\noindent (1)]
\item The sequence $\mu^i(h)$ is related to the \emph{Chern-Schwartz-MacPherson class} \cite{MacPherson} of the hypersurface via the formula
\[
c_{SM}\big({\mathbf 1}_{D(h)}\big) = \sum_{i=0}^n (-1)^i \mu^i(h) H^i (1+H)^{n-i},
\]
where $H^i$ is the class of a codimension $i$ linear subspace in the Chow ring $A_*(\mathbb{P}^n)$. The above is a reformulation in terms of $\mu^i(h)$ of a formula due to Aluffi \cite[Theorem 2.1]{Aluffi}. See Remark \ref{CSM-class}.
\item If $h$ is a product of linear forms, then $\mu^i(h)$ are the Betti numbers of $D(h)$. In this case, the sequence $\mu^i(h)$ is determined by the expression
\[
\chi_M(q)/(q-1) = \sum_{i=0}^n (-1)^i \mu^i(h) q^{n-i},
\]
where $M$ is the matroid corresponding to the central hyperplane arrangement in $\mathbb{C}^{n+1}$ defined by $h$. This CW model of $D(h)$ is the one used by Dimca and Papadima in \cite{Dimca-Papadima} to show that $D(h)$ is \emph{minimal}. See Corollary \ref{formula}.%The relation between the Betti numbers of $D(h)$ and the coefficients of $\chi_\mathcal{A}(q)$ is due to Orlik-Solomon \cite{Orlik-Solomon}.
\end{enumerate}

Theorem \ref{bound} is an analogue of Kouchnirenko's theorem \cite{Kouchnirenko} relating the Milnor number with the Newton polytope. Let $\Delta \subset \mathbb{R}^n$ be the standard $n$-dimensional simplex, and let $\Delta_h \subset \mathbb{R}^n$ be the convex hull of exponents of dehomogenized monomials appearing in one of the partial derivatives of $h$. Then the numbers $\mu^i(h)$ satisfy
\[
b_i\big(D(h)\big) \le \mu^i(h) \le \text{MV}_n(\underbrace{\Delta,\ldots,\Delta}_{n-i}, \underbrace{\Delta_h,\ldots,\Delta_h}_i),
\]
where $\text{MV}_n$ stands for the mixed volume of convex polytopes. Therefore, if $h$ has small Newton polytope under some choice of coordinates, then the Betti numbers of $D(h)$ cannot be large. Example \ref{sharp} shows that for each $n$ there is an $h$ for which the equalities hold simultaneously for all $i$.

Theorem \ref{B} characterizes homology classes corresponding to subvarieties of $\mathbb{P}^n \times \mathbb{P}^m$, up to a positive integer multiple. Let $\xi$ be an element of the Chow group,
\[
\xi =\sum_{i} e_i \big[\mathbb{P}^{k-i} \times \mathbb{P}^{i}\big] \in A_k(\mathbb{P}^n \times \mathbb{P}^m),
\]
where the term containing $e_i$ is zero if $n<k-i$ or $m<i$. Then some multiple of $\xi$ corresponds to an irreducible subvariety iff the $e_i$ form a log-concave sequence of nonnegative integers with no internal zeros. In particular, the numbers $\mu^i(h)$ form a log-concave sequence of nonnegative integers with no internal zeros for any $h$. Combined with Corollary \ref{formula}, this shows the validity of Conjecture \ref{Rota-Welsh} for matroids representable over $\mathbb{C}$.

Trung and Verma show in \cite{Trung-Verma} that the mixed volumes of lattice polytopes in $\mathbb{R}^n$ are the mixed multiplicities of certain monomial ideals, each generated by monomials of the same degree. They then ask whether an analogue of the Alexandrov-Fenchel inequality on mixed volumes of convex bodies holds for ideals of height $n$ in a local or standard graded ring of dimension $n+1$ \cite[Question 2.7]{Trung-Verma}. In Example \ref{counter-example}, we show that the answer to their question is no in general. %Instead, we show in Theorem \ref{B} that mixed multiplicities of a pair of ideals in a standard graded domain over an algebraically closed field form a log-concave sequence if the ideals are generated by elements of the same degree.

As will be clear from below, the present work is heavily indebted to other works in singularity theory, algebraic geometry, and convex geometry. Specifically, Theorem \ref{A} depends on a theorem of Dimca and Papadima \cite{Dimca-Papadima}. %built upon works of L\^e \cite{Le} and Hamm \cite{Hamm}. 
Theorem \ref{B} is a small variation of a theorem of Teissier and Khovanskii, combined with a result of Shephard \cite{Khovanskii,Shephard,Teissier3}.

\section{Preliminaries}

\subsection{Mixed multiplicities of ideals}

We give a quick introduction to mixed multiplicities of ideals. We refer to \cite{Trung,Trung-Verma} for a fuller account. 

Let $S$ be a local or standard graded algebra, $\mathfrak{m}$ be the maximal or the irrelevant ideal, and $J$ be an ideal of $S$. We define the standard bigraded algebra $R$ by
\[
R=R(\mathfrak{m} | J) = \bigoplus_{(u,v) \in \mathbb{N}^2} \mathfrak{m}^u J^v / \mathfrak{m}^{u+1} J^v, \qquad \text{where} \quad \mathfrak{m}^0 = J^0 = S.
\]
Then $R$ has a Hilbert polynomial, meaning that there is a polynomial $\text{HP}_{R}$ such that
\[
\text{HP}_{R}(u,v) = \dim_{S/\mathfrak{m}} \mathfrak{m}^u J^v / \mathfrak{m}^{u+1} J^v \quad \text{for large $u$ and large $v$}.
\]
We write
\[
\text{HP}_{R}(u,v) = \sum_{i=0}^n \frac{e_i}{(n-i)! i!} u^{n-i} v^i + \text{(lower degree terms)},
\]
where $n$ is the degree of $\text{HP}_R$. Then $e_i=e_i(\mathfrak{m}|J)$ are nonnegative integers, called the \emph{mixed multiplicities} of $\mathfrak{m}$ and $J$. If $J$ is an ideal of positive height, then $n = \dim S -1$  \cite[Theorem 1.2]{Trung-Verma}.
%In case $J$ is $\mathfrak{m}$-primary, the above definition is consistent with the classical definition of mixed multiplicities \cite{Teissier, Swanson-Huneke} because
%\[
%\dim_{S/\mathfrak{m}} \mathfrak{m}^u J^v / \mathfrak{m}^{u+1} J^v = \dim_{S/\mathfrak{m}} S/ \mathfrak{m}^{u+1} J^v -\dim_{S/\mathfrak{m}} S/ \mathfrak{m}^{u} J^v.
%\]

\begin{remark}\label{projective degree}
Let $S$ be the homogeneous coordinate ring of a projective variety $X \subseteq \mathbb{P}^n$ over a field $\mathbb{K}$, $J \subset S$ be an ideal generated by nonzero homogeneous elements of the same degree $h_0,h_1,\ldots,h_m$, and $\varphi_J$ be the rational map
\[
\varphi_J : X \dashrightarrow \mathbb{P}^m
\]
defined by the ratio $(h_0:h_1:\cdots:h_m)$. By the \emph{graph} of $\varphi_J$ we mean the closure $\Gamma_J$ in $\mathbb{P}^n \times \mathbb{P}^m$ of the graph of $\varphi_J|_U$, where $U$ is an open subset of $X$ where $\varphi_J$ is defined.
%The graph $\Gamma_J$ of $\varphi_J$ is defined to be the Zariski closure of the set
%\[
%U_J=\big\{ p \times\varphi_J(p) \mid \text{$\varphi_J$ is defined at $p \in X$}\big\} \subset  \mathbb{P}^n \times \mathbb{P}^m.
%\]
%Note that for a bihomogeneous polynomial $f$
%\[
%\text{$f(z_0,\ldots,z_n,h_0,\ldots,h_m)$ vanishes on $X$} 
%\]
Note that a bihomogeneous polynomial $f$ in the variables $\{x_i,y_j\}_{0 \le i \le n,0 \le j \le m}$ vanishes on $\Gamma_J$ iff it has zero image in $R$, that is, $R$ is the bihomogeneous coordinate ring of $\Gamma_J$. It follows from the standard relation between Hilbert polynomials and intersection theory that
\[
\big[\Gamma_J\big] = \sum_i e_i \big[ \mathbb{P}^{k-i} \times \mathbb{P}^i \big] \in A_{k}(\mathbb{P}^n \times \mathbb{P}^m),
\]
where $k$ is the dimension of $X$.
In other words, the mixed multiplicities of $\mathfrak{m}$ and $J$ are the \emph{projective degrees} of the rational map $\varphi_J$ \cite[Example 19.4]{Harris}. In particular, $e_k$ is the degree of $\varphi_J$ times the degree of the image of $\varphi_J$.
\end{remark}

The notion of mixed multiplicities can be extended to a sequence of ideals $J_1,\ldots,J_s$. Consider the standard $\mathbb{N}^{s+1}$-graded algebra
\[
R=R(\mathfrak{m}|J_1,\ldots,J_s) = \bigoplus_{(u,v_1,\ldots,v_s) \in \mathbb{N}^{s+1}} \mathfrak{m}^uJ_1^{v_1} \cdots J_s^{v_s}/\mathfrak{m}^{u+1}J_1^{v_1} \cdots J_s^{v_s}.
\]
The \emph{mixed multiplicities} $e_\mathbf{i}=e_{\mathbf{i}}(\mathfrak{m}|J_1,\ldots,J_s)$ are defined by the expression
\[
\text{HP}_{R}(u,v_1,\ldots,v_s) = \sum_{|\mathbf{i}|=n} \frac{e_\mathbf{i}}{i_0! i_1! \cdots i_s!} u^{i_0}v_1^{i_1}\cdots v_s^{i_s} + \text{(lower degree terms)}
\]
where the sum is over the sequences $\mathbf{i}=(i_0,i_1,\ldots,i_s)$ of nonnegative integers whose sum is $n$. %It can be shown that if $\dim S/(0:J_1 \cdots J_s^\infty) \ge 1$, then the total degree is $\dim S/(0:J_1 \cdots J_s^\infty)-1$ \cite[Theorem 1.2]{Trung-Verma}.
Trung and Verma show in \cite[Corollary 1.6]{Trung-Verma} that positive mixed multiplicities can be expressed as Hilbert-Samuel multiplicities. %In case $S$ is a polynomial ring over a field and $J$ is generated by homogeneous elements of the same degree, for example, Theorem \ref{T} below provides an effective way of computing the mixed multiplicities of $\mathfrak{m}$ and $J$.
Let $S$ be a local ring with infinite residue field and $J_1,\ldots,J_s$ be ideals of $S$. Suppose that $\text{HP}_R$ has the total degree $n$ and let $\mathbf{i} =(i_0,i_1,\ldots,i_s)$ be any sequence of nonnegative integers whose sum is $n$. 

\begin{theorem}[Trung-Verma]\label{T}
Let $Q$ be an ideal generated by $i_1$ general elements\footnote{We say that a property holds for a \emph{general element} $f$ of an ideal $(f_1,\ldots,f_m)$ in a local ring with infinite residue field $\kappa$ if there exists a nonempty Zariski-open subset $U \subseteq \kappa^m$ such that whenever $f=\sum_{k=1}^m c_k f_k$ and the image of $(c_1,\ldots,c_m)$ in $\kappa^m$ belongs to $U$, the property holds for $f$.} in $J_1$, \ldots, and $i_s$ general elements in $J_s$. Then 
\[
e_\mathbf{i}(\mathfrak{m}|J_1,\ldots,J_s)>0 \quad \text{iff} \quad \dim S/(Q:J_1 \cdots J_s^\infty)=i_0+1.
\]
In this case,
\[
e_\mathbf{i}(\mathfrak{m}|J_1,\ldots,J_s) = e\big(\mathfrak{m}, S/(Q:J_1\cdots J_s^\infty)\big).
\]
\end{theorem}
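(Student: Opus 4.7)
The plan is to prove Theorem~\ref{T} by induction on $N = i_1 + \cdots + i_s$, at each step reducing the multi-index by one via specialization along a general element, until only the pure $\mathfrak{m}$-direction remains and the mixed multiplicity collapses to an ordinary Hilbert--Samuel multiplicity.

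First I would handle the base case $N = 0$: here $\mathbf{i} = (n, 0, \ldots, 0)$ and $Q = 0$, and the claim reduces to showing that $e_{(n,0,\ldots,0)}(\mathfrak{m}|J_1, \ldots, J_s) = e(\mathfrak{m}, S/(0 : J_1 \cdots J_s^\infty))$, with positivity equivalent to $\dim S/(0 : J_1 \cdots J_s^\infty) = n+1$. The strategy is to analyze the multigraded Hilbert function in the pure $u$-direction: for $\mathbf{v}$ sufficiently large, the submodule of $S$ that acts as zero on $J_1^{v_1}\cdots J_s^{v_s} / \mathfrak{m}^{u+1} J_1^{v_1}\cdots J_s^{v_s}$ stabilizes, and its stable value is precisely the saturation $(0 : J_1 \cdots J_s^\infty)$. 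The leading term in $u^n/n!$ therefore records the Hilbert--Samuel multiplicity of $\mathfrak{m}$ on $S/(0 : J_1\cdots J_s^\infty)$, and this is positive exactly when that quotient has dimension $n+1$.

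For the inductive step with $i_k \geq 1$, I would pick a general element $f \in J_k$. The central technical input is a filter-regularity lemma: for $(u, \mathbf{v}) \gg 0$, multiplication by $f$ is injective on $\mathfrak{m}^u J_1^{v_1} \cdots J_s^{v_s} / \mathfrak{m}^{u+1} J_1^{v_1} \cdots J_s^{v_s}$. This follows because a sufficiently general element of $J_k$ avoids every associated prime of $S$ that does not contain $J_k$, so its annihilator meets $J_k^{v_k}$ trivially for $v_k$ large. Using filter-regularity to compare the Hilbert polynomials of $R(\mathfrak{m}|J_1,\ldots,J_s;S)$ and $R(\mathfrak{m}|J_1,\ldots,J_s;S/(f))$ via the multigraded short exact sequence $0 \to S \xrightarrow{\cdot f} S \to S/(f) \to 0$ (with the appropriate shift in the $J_k$-degree) yields
\[
e_{(i_0,\ldots,i_k,\ldots,i_s)}\big(\mathfrak{m}|J_1,\ldots,J_s; S\big) = e_{(i_0,\ldots,i_k-1,\ldots,i_s)}\big(\mathfrak{m}|J_1,\ldots,J_s; S/(f)\big).
\]
Iterating $N$ times, peeling off $i_k$ general elements from $J_k$ for each $k$, reduces the assertion to the base case applied to $S/Q$; the saturation $(Q : J_1 \cdots J_s^\infty)$ in the statement arises precisely as the base-case saturation transported back through the chain of specializations.

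The main obstacle will be the filter-regularity lemma together with coordinating the notion of ``general element'' across the $s$ different grading directions. One must exhibit a single nonempty Zariski-open condition on the coefficients defining each successive $f$ which simultaneously avoids every associated prime of every partially-reduced ring relevant to the induction, and which makes the dimension of $S/Q$ drop by exactly one at each step (this is where the dimension hypothesis $i_0 + 1 = \dim S/(Q:J_1\cdots J_s^\infty)$ enters as the \emph{equality} case of the generic dimension inequality). Noetherianity guarantees that only finitely many primes intervene, but the bookkeeping across all $s$ ideals is delicate. The saturation by $J_1\cdots J_s^\infty$ appears naturally because embedded components of $S/Q$ supported on $V(J_1\cdots J_s)$ contribute nothing to any mixed multiplicity with a positive $J_k$-exponent yet would inflate the ordinary $\mathfrak{m}$-multiplicity of $S/Q$; stripping them away gives the clean formula.
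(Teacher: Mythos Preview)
The paper does not prove Theorem~\ref{T}; it is quoted verbatim as a result of Trung and Verma, with the attribution ``Trung and Verma show in \cite[Corollary 1.6]{Trung-Verma} that positive mixed multiplicities can be expressed as Hilbert--Samuel multiplicities,'' and the reader is referred to \cite{Trung,Trung-Verma} for details. So there is no proof in the paper to compare against.

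That said, your sketch follows the standard strategy of the original Trung--Verma argument: induction on $i_1+\cdots+i_s$, with the inductive step carried out by passing to $S/(f)$ for a filter-regular (in Trung's terminology, \emph{superficial}) element $f\in J_k$ and invoking the degree-shift identity for the multigraded Hilbert polynomial. The base case and the appearance of the saturation $(Q:J_1\cdots J_s^\infty)$ are identified correctly. One point that is slightly understated in your outline: filter-regularity of $f$ alone gives the identity of Hilbert polynomials, but it does not by itself guarantee that the total degree of $\text{HP}_{R(\mathfrak{m}|J_1,\ldots,J_s;S/(f))}$ is $n-1$ rather than something smaller; in the Trung--Verma treatment this is exactly where the positivity hypothesis $e_{\mathbf{i}}>0$ is used, and the equivalence with the dimension condition is established simultaneously rather than as a separate check at the end. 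Apart from that bookkeeping issue, the skeleton is sound and matches the literature.
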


\noindent One readily verifies that the analogous statement holds for a standard graded ring $S$ over an infinite field, the irrelevant ideal $\mathfrak{m}$, and homogeneous ideals $J_1,\ldots,J_s$. We refer to \cite{Trung, Trung-Verma} for details and more general statements.

\subsection{Mixed multiplicities of ideals and mixed volumes of polytopes}

The $n$-dimensional volume $\text{V}_n$ of a nonnegative linear combination $v_1\Delta_1+\cdots+v_n\Delta_n$ of convex bodies in $\mathbb{R}^n$ is a homogeneous polynomial in the coefficients $v_1,\ldots,v_n$. The \emph{mixed volume} of $\Delta_1,\ldots,\Delta_n$ is defined to be the coefficient of the monomial $v_1v_2\cdots v_n$ in the homogeneous polynomial. We follow the convention of \cite[Chapter 7]{Cox-Little-Oshea} and write $\text{MV}_n$ for the mixed volume of convex polytopes in $\mathbb{R}^n$. For example,
\[
\text{MV}_n(\Delta,\ldots,\Delta)=1
\]
for the standard $n$-dimensional simplex $\Delta \subset \mathbb{R}^n$.
%\[
%\Delta= \text{conv}\big\{\mathbf{0},\mathbf{e}_1,\ldots,\mathbf{e}_n\big\} \subset \mathbb{R}^n
%\]
%where $\mathbf{e}_i$ are the standard unit vectors of $\mathbb{R}^n$. 
It follows from Ehrhart's theorem \cite[Section 6.3]{Bruns-Herzog} that the multiplicity of a toric algebra is the normalized volume of the associated lattice polytope. Trung and Verma use this relation to show in \cite[Corollary 2.5]{Trung-Verma} that mixed volumes of lattice convex polytopes in $\mathbb{R}^n$ are mixed multiplicities of certain monomial ideals.  Let $\mathbb{K}$ be a field.

\begin{theorem}[Trung-Verma]\label{mixed_volume}
Let $\Delta_1,\ldots,\Delta_n$ be lattice convex polytopes in $\mathbb{R}^n$. Let $J_i$ be an ideal of $\mathbb{K}[z_0,z_1,\ldots,z_n]$ generated by a set of monomials of the same degree such that $\Delta_i$ is the convex hull of exponents of their dehomogenized monomials in $\mathbb{K}[z_1,\ldots,z_n]$. Then
\[
\text{MV}_n(\Delta_1,\ldots,\Delta_n) = e_{(0,1,\ldots,1)}(\mathfrak{m}|J_1,\ldots,J_n).
\]
\end{theorem}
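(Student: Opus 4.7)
The plan is to convert the mixed multiplicity on the right-hand side into a Hilbert-Samuel multiplicity via Theorem \ref{T}, and then recognize that multiplicity as the number of solutions of a generic monomial-supported system, which by the Bernstein-Kushnirenko theorem equals $\text{MV}_n(\Delta_1, \ldots, \Delta_n)$.

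For each $i$, let $f_i$ be a general $\mathbb{K}$-linear combination of the monomial generators of $J_i$, and set $Q = (f_1, \ldots, f_n) \subset S = \mathbb{K}[z_0, \ldots, z_n]$. The graded analogue of Theorem \ref{T} applied at $\mathbf{i} = (0, 1, 1, \ldots, 1)$ yields
\[
e_{(0,1,\ldots,1)}(\mathfrak{m}|J_1,\ldots,J_n) \;=\; e\bigl(\mathfrak{m},\, S/(Q : (J_1 \cdots J_n)^\infty)\bigr),
\]
provided $\dim S/(Q : (J_1 \cdots J_n)^\infty) = 1$. Geometrically, $\mathrm{Proj}\bigl(S/(Q : (J_1 \cdots J_n)^\infty)\bigr)$ is the residual zero-scheme to the common base locus $V(J_1 \cdots J_n)$ inside $V(f_1, \ldots, f_n) \subset \mathbb{P}^n$; genericity of the $f_i$ forces this residual to be zero-dimensional, so the dimension hypothesis holds and the Hilbert-Samuel multiplicity equals its length.

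To evaluate this length, I would dehomogenize at $z_0 = 1$: each $f_i$ becomes a polynomial in $\mathbb{K}[z_1, \ldots, z_n]$ whose Newton polytope is exactly $\Delta_i$ and whose coefficients are generic. The Bernstein-Kushnirenko theorem then produces $\text{MV}_n(\Delta_1, \ldots, \Delta_n)$ solutions to $f_1 = \cdots = f_n = 0$ in the torus $(\mathbb{K}^*)^n$, matching the convention $\text{MV}_n(\Delta, \ldots, \Delta) = 1$ used here. The main obstacle is reconciling the saturated projective scheme with this torus count: one must verify that no isolated intersection point of the $f_i$ sits on a coordinate hyperplane outside the base locus $V(J_1 \cdots J_n)$. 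This is handled by restricting the system to each hyperplane $\{z_k = 0\}$; the restriction is either absorbed into the base locus of some $J_i$ (and so is killed by saturation), or it is again a generic system supported on the corresponding faces of the $\Delta_i$, where an induction on $n$ together with the genericity of the coefficients shows that no additional contributions appear.
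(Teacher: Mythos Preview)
The paper does not give its own proof of this statement; it cites Trung--Verma \cite[Corollary 2.5]{Trung-Verma} and indicates in the sentence preceding the theorem that the argument goes through Ehrhart's theorem: for monomial ideals the graded pieces $\mathfrak{m}^u J_1^{v_1}\cdots J_n^{v_n}/\mathfrak{m}^{u+1}J_1^{v_1}\cdots J_n^{v_n}$ have monomial bases counted by lattice points in dilates of the Minkowski sum $v_1\Delta_1+\cdots+v_n\Delta_n$, and Ehrhart's theorem converts the leading term of that count into the volume polynomial whose $v_1\cdots v_n$ coefficient is $\text{MV}_n(\Delta_1,\ldots,\Delta_n)$.

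Your route is genuinely different: you pass through Theorem~\ref{T} to turn the mixed multiplicity into the degree of a residual zero-scheme, and then invoke Bernstein--Kushnirenko to count torus solutions. This is a legitimate alternative, and it has the appeal of being geometric rather than enumerative. What the Ehrhart approach buys is directness and field-independence: it works over any field because it never leaves the combinatorics of monomials, whereas Bernstein--Kushnirenko needs $\mathbb{K}$ algebraically closed (you would have to insert a base-change step and observe that mixed multiplicities are stable under field extension). What your approach buys is a concrete picture of the mixed multiplicity as an intersection number, which is closer in spirit to how the paper later uses these quantities.

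The one place your sketch is thin is exactly where you flag it: matching the saturated projective scheme with the torus count. You must rule out contributions on \emph{every} coordinate hyperplane, including the hyperplane at infinity $\{z_0=0\}$, and the phrase ``an induction on $n$ together with the genericity of the coefficients shows that no additional contributions appear'' is doing real work. The clean way to close this is to note that on each $\{z_k=0\}\cong\mathbb{P}^{n-1}$ the restricted system is either entirely contained in some $V(J_i)$ (hence killed by saturation) or consists of $n$ generic equations on an $(n-1)$-dimensional variety, so that outside the restricted base loci it has empty solution set by a dimension count. This is standard but not automatic, and is in effect the ``no solutions at infinity for generic coefficients'' half of Bernstein's theorem itself.
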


\noindent Therefore, by the Alexandrov-Fenchel inequality on mixed volumes of convex bodies \cite[Theorem 6.3.1]{Schneider}, if $J$ is an ideal generated by monomials of the same degree, then
%\[
%e_{i-1}(\mathfrak{m} | J) e_{i+1}(\mathfrak{m} | J) \le e_i(\mathfrak{m} | J)^2  \qquad \text{for $0<i<n$}.
%\]
the mixed multiplicities of $\mathfrak{m}$ and $J$ form a log-concave sequence. In this simple setting, a question of Trung and Verma can be stated as follows.

\begin{question}
Under what conditions on $J$ do the $e_i$ form a log-concave sequence?
\end{question}

\noindent In \cite[Question 2.7]{Trung-Verma}, Trung and Verma suggest the condition $\text{ht}(J) =n$. Corollary \ref{mixed-multiplicity} says that when $J$ is generated by elements of the same degree, the mixed multiplicities of $\mathfrak{m}$ and $J$ form a log-concave sequence. Example \ref{counter-example} shows that the answer to the question in its original formulation is no in general.

\begin{remark}\label{TRS}
The question is interesting in view of a theorem of Teissier \cite{Teissier2} and Rees-Sharp \cite{Rees-Sharp} on mixed multiplicities. The theorem says that if $J$ is an $\mathfrak{m}$-primary ideal in a local ring, then the mixed multiplicities of $\mathfrak{m}$ and $J$ form a \emph{log-convex} sequence. See \cite[Remark 1.6.8]{Lazarsfeld} for a Hodge-theoretic explanation.
%\[
%e_i(\mathfrak{m} | J)^2 \le e_{i-1}(\mathfrak{m} | J) e_{i+1}(\mathfrak{m} | J) \qquad \text{for $0<i<n$}.
%\]
%In other words, the mixed multiplicities form a \emph{log-convex} sequence.
\end{remark}

\section{The main results}

\subsection{Milnor numbers of projective hypersurfaces}

Let $h$ be a nonconstant homogeneous polynomial in $\mathbb{C}[z_0,\ldots,z_n]$ and $J_h$ be the Jacobian ideal of $h$. Denote
\begin{eqnarray*}
V(h) &=& \big\{p \in \mathbb{P}^n \mid h(p)=0 \big\},\\
D(h) &=& \big\{p \in \mathbb{P}^n \mid h(p) \neq 0\big\},
\end{eqnarray*}
where $\mathbb{P}^n$ is the $n$-dimensional complex projective space.

\begin{definition}\label{definition}
We define $\mu^i(h)$ to be the $i$-th mixed multiplicity of $\mathfrak{m}$ and $J_h$.
\end{definition}

Theorem \ref{A} relates the numbers $\mu^i(h)$ to the topology of $D(h)$ by repeatedly applying a theorem of Dimca-Papadima \cite[Theorem 1]{Dimca-Papadima}. In view of Conjecture \ref{Rota-Welsh}, the main technical point is Lemma \ref{chain}, which asserts that the process of taking derivatives and taking a general hyperplane section is compatible in an asymptotic sense. Let us fix a sufficiently general flag of linear subspaces
\[
\mathbb{P}^0 \subset \mathbb{P}^1 \subset \cdots \subset \mathbb{P}^{n-1} \subset \mathbb{P}^n.
\]
For $i=0,\ldots,n$, set $V(h)_i = V(h) \cap \mathbb{P}^i$ and $D(h)_i = D(h) \cap \mathbb{P}^i$.

\begin{theorem}\label{A}
For $i=0,\ldots,n$, the following hold. Read $D(h)_{-1} = \varnothing$.
\begin{enumerate}
\item $D(h)_{i}$ is homotopy equivalent to a CW-complex obtained from $D(h)_{i-1}$ by attaching $\mu^{i}(h)$ cells of dimension $i$. In particular,
\[
\mu^{i}(h) = (-1)^{i} \chi\big( D(h)_{i} \setminus D(h)_{i-1} \big).
\]
\item $V(h)_{i} \setminus V(h)_{i-1}$ is homotopy equivalent to a bouquet of $\mu^{i}(h)$ spheres of dimension $i-1$. In particular,
\[
\mu^{i}(h) = \tilde b_{i-1}\big( V(h)_{i} \setminus V(h)_{i-1} \big).
\]
\end{enumerate}
\end{theorem}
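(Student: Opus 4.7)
I will prove (1) and (2) simultaneously by induction on $i$, repeatedly applying the theorem of Dimca--Papadima \cite{Dimca-Papadima} to the restrictions $g_i := h|_{\mathbb{P}^i}$ and feeding the output into Lemma \ref{chain} at each step. The base case $i=0$ is trivial: a generic $\mathbb{P}^0$ lies in $D(h)$, so $D(h)_0$ is a single $0$-cell attached to $\varnothing$ and $V(h)_0=\varnothing$, and both assertions reduce to $\mu^0(h)=1$, which is just the statement that the $0$-th mixed multiplicity of $\mathfrak{m}$ and $J_h$ in $\mathbb{C}[z_0,\ldots,z_n]$ equals the multiplicity of $\mathfrak{m}$.

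For the inductive step, fix $1\le i\le n$ and apply Dimca--Papadima to $g_i\in\mathbb{C}[z_0,\ldots,z_i]$ with the generic hyperplane $\mathbb{P}^{i-1}\subset\mathbb{P}^i$ coming from the chosen flag. Their theorem yields two pieces of data. First, $D(g_i)$ is homotopy equivalent to a CW complex obtained from $D(g_i)\cap\mathbb{P}^{i-1}$ by attaching $\deg(\nabla g_i)$ cells of dimension $i$, where $\nabla g_i:\mathbb{P}^i\dashrightarrow\mathbb{P}^i$ is the polar map whose components are the partials of $g_i$. Second, as a formal consequence via the long exact sequence of the pair, combined with Hamm--L\^e connectivity for the affine variety $V(g_i)\setminus\bigl(V(g_i)\cap\mathbb{P}^{i-1}\bigr)$, this affine piece is $(i-2)$-connected and hence homotopy equivalent to a wedge of $\deg(\nabla g_i)$ spheres of dimension $i-1$. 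Identifying $D(g_i)=D(h)_i$ and $V(g_i)=V(h)_i$, both (1) and (2) at level $i$ come down to the identity $\deg(\nabla g_i)=\mu^i(h)$.

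The first half of this identity is formal: by Remark \ref{projective degree}, the components of $\nabla g_i$ generate $J_{g_i}$ in the same degree, so $\deg(\nabla g_i)$ is the top mixed multiplicity of $\mathfrak{m}$ and $J_{g_i}$ in the coordinate ring of $\mathbb{P}^i$, which by Definition \ref{definition} applied to $g_i$ is $\mu^i(g_i)$. The second half $\mu^i(g_i)=\mu^i(h)$ is precisely Lemma \ref{chain}; with it, the induction closes, and the Euler characteristic and reduced Betti number formulas in (1) and (2) drop out of the CW and bouquet descriptions.

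\textbf{Main obstacle.} The crux is Lemma \ref{chain}. Restricting $h$ to a hyperplane $H$ and then differentiating kills one partial derivative compared to differentiating first and then restricting, so $J_{h|_H}$ is genuinely not the restriction of $J_h$. The lemma must nevertheless show that the two discrepant Jacobian ideals produce the same leading coefficient in the bigraded Hilbert polynomial, matching $\mu^i(h)$ in the ambient ring with the top mixed multiplicity $\mu^i(h|_H)$ in the hyperplane ring. I expect the proof to combine Theorem \ref{T} (to express both quantities as Hilbert--Samuel multiplicities of suitable colon ideals $Q:J^\infty$) with a Bertini-type genericity argument for the pair $(\mathfrak{m},J_h)$ along the flag, so that the discarded partial contributes only to lower-order terms of $\text{HP}_R$. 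Once this single comparison is secured, nothing else in the argument is more than routine CW bookkeeping.
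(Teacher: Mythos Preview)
Your overall architecture matches the paper's: induct along the flag, apply Dimca--Papadima (Theorem \ref{DP}) at each level to get the CW/bouquet statements with $\deg(\nabla g_i)=\mu^i(g_i)$ cells, and then identify $\mu^i(g_i)$ with $\mu^i(h)$.

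There is one imprecision worth fixing. The identity $\mu^i(g_i)=\mu^i(h)$ is \emph{not} ``precisely Lemma \ref{chain}.'' Lemma \ref{chain} only asserts that for a general linear form $x$, the Jacobian ideal $J_{\overline h}$ of the restriction is a \emph{reduction} of $J_h\overline S$ in $\overline S=S/xS$; it says nothing directly about mixed multiplicities. The paper gets the numerical identity by chaining three facts at each hyperplane cut: Lemma \ref{chain} (reduction), Lemma \ref{reduction} (a reduction has the same mixed multiplicities as the ideal it reduces), and Lemma \ref{Lefschetz} (for $i$ below the top, $e_i(\mathfrak{m}\overline S\,|\,J\overline S)=e_i(\mathfrak{m}\,|\,J)$). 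Iterating these three along the flag from $\mathbb{P}^n$ down to $\mathbb{P}^i$ yields $\mu^i(g_i)=\mu^i(h)$. Your ``Main obstacle'' paragraph also mispredicts the mechanism of Lemma \ref{chain}: it is not proved via Theorem \ref{T} and colon ideals, but via integral closure---one checks that a general hyperplane in the span of the partials maps to a Noether normalization of the fiber ring $\mathcal{F}_{J_h\overline S}$, so the image generates a reduction. Once you invoke Lemmas \ref{reduction} and \ref{Lefschetz} alongside Lemma \ref{chain}, your plan is complete and coincides with the paper's proof.
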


As a corollary, we obtain a formula for the topological Euler characteristic of the complement $D(h)$ in terms of mixed multiplicities:
\[
\chi\big(D(h)\big)=\sum_{i=0}^n (-1)^i \mu^i(h).
\]
The point of the above formula is that the numbers $\mu^i(h)$ are effectively calculable in most computer algebra systems. Since $\chi$ is additive on complex algebraic varieties \cite[p.141]{Fulton}, the formula provides a way of computing topological Euler characteristics of arbitrary complex projective varieties and affine varieties.

\begin{remark}\label{CSM-class}
$\mu^i(h)$ are the projective degrees of the Gauss map
\[
\text{grad}(h) : \mathbb{P}^n \dashrightarrow \mathbb{P}^n, \qquad p \longmapsto \Bigg(\frac{\partial h}{\partial z_0}(p):\cdots: \frac{\partial h}{\partial z_n}(p)\Bigg).
\]
See Remark \ref{projective degree} above. With this identification, a theorem of Aluffi \cite[Theorem 2.1]{Aluffi} says that the push-forward of the \emph{Chern-Schwartz-MacPherson class} \cite{MacPherson} of the hypersurface is given by the formula
\[
c_{SM}\big({\mathbf 1}_{D(h)}\big) = \sum_{i=0}^n (-1)^i \mu^i(h) H^i (1+H)^{n-i},
\]
where ${\mathbf 1}_{D(h)}$ is the characteristic function of the complement and $H^i$ is the class of a codimension $i$ linear subspace in the Chow ring $A_*(\mathbb{P}^n)$. Therefore Theorem \ref{A} provides an alternative explanation of the formula of \cite{Aluffi},
\[
\chi\big(D(h)\big) = \int c_{SM}\big({\mathbf 1}_{D(h)}\big) = \sum_{i=0}^n (-1)^i \mu^i(h),
\]
where it is first proposed as an effective way of computing $\chi$ of arbitrary complex projective and affine varieties. %We note that computing $\mu^i(h)$ via Theorem \ref{T} reduces cost of computation by bypassing the steps 3.6, 3.7, and 3.8 of \cite{Aluffi}.
\end{remark}

\begin{example}\label{mu-one}
Let $h$ be a nonconstant homogeneous polynomial in $S=\mathbb{C}[z_0,\ldots,z_n]$. Write $h=\prod_{i=1}^k g_i^{m_i}$, where the $g_i$ are distinct irreducible factors of $h$ and $m_i \ge 1$. Let $\sqrt{h}$ be the radical $\prod_{i=1}^k g_i$ and $d$ be the degree of $\sqrt{h}$. Applying Theorem \ref{T}, we see that
\[
\mu^0(h) = e(\mathfrak{m} , S ) =1
\]
and, for sufficiently general constants $c_0,c_1,\ldots,c_n \in \mathbb{C}$,
\begin{eqnarray*}
\mu^1(h) &=& e\Big(\mathfrak{m}, S \big/ \sum_{j=0}^n c_j \sum_{i=1}^k m_i g_1^{m_1} \cdots g_i^{m_i-1} \cdots g_k^{m_k} \frac{\partial g_i}{\partial z_j} : J_h^\infty \Big) \\
&=& e\Big(\mathfrak{m}, S \big/ \sum_{j=0}^n c_j \sum_{i=1}^k m_i g_1 \cdots \hat g_i \cdots g_k \frac{\partial g_i}{\partial z_j} : J_h^\infty \Big) \\
&=& e\Big(\mathfrak{m}, S \big/ \sum_{j=0}^n c_j \sum_{i=1}^k m_i g_1 \cdots \hat g_i \cdots g_k \frac{\partial g_i}{\partial z_j} \Big)  \\
&=& d-1,
\end{eqnarray*}
where \ $\hat{}$ \ indicates an omission of the corresponding factor. This agrees with the fact that $D(h)_0$ is a point and $D(h)_1$ is homotopic to a bouquet of $d-1$ circles.
\end{example}

\begin{example}
Suppose $h \in \mathbb{C}[z_0,\ldots,z_n]$ is reduced of degree $d$ and $V(h)$ has only isolated singular points, say at $p_1,\ldots,p_m$. Since $J_h$ has height $n$, sufficiently general linear combinations $a_1,\ldots,a_n$ of its generators form a regular sequence. Therefore, for $0 \le i < n$,
\begin{eqnarray*}
\mu^i(h) &=& e\big(\mathfrak{m},S/(a_1,\ldots,a_i):J_h^\infty\big) \\
&=& e\big(\mathfrak{m}, S/(a_1,\ldots,a_i)\big) \\
&=& (d-1)^i.
\end{eqnarray*}
From the same formula, we see that $\mu^n(h)$ equals the sum of the degrees of the components of $\text{Proj}\big(S/(a_1,\ldots,a_n)\big)$ whose support is not contained in the singular locus of $V(h)$. Since the degree of a component of $\text{Proj}\big(S/(a_1,\ldots,a_n)\big)$ supported on $p_i$ is the Milnor number $\mu(h,p_i)$ at $p_i$, we have
\[
\mu^n(h) =(d-1)^n -\sum_{i=1}^m \mu(h,p_i).
\]
Compare \cite[Corollary 5.4.4]{Dimca}.
\end{example}

The numbers $\mu^i(h)$ satisfy a version of Kouchnirenko's theorem \cite{Kouchnirenko} relating the Milnor number with the Newton polytope. 

\begin{definition}
For any nonzero homogeneous $h \in \mathbb{C}[z_0,z_1,\ldots,z_n]$, we define $\Delta_h \subset \mathbb{R}^n$ to be the convex hull of exponents of dehomogenized monomials of $\mathbb{C}[z_1,\ldots,z_n]$ appearing in one of the partial derivatives of $h$. 
\end{definition}

\noindent Note that $\Delta_h$ is determined by the Newton polytope of $h$.

\begin{example}
Let $h$ be the degree $d$ homogeneous polynomial in $\mathbb{C}[z_0,z_1]$,
\[
h = \sum_{i=a}^b c_iz_0^{d-i}z_1^i 
= z_0^{d-b}z_1^a \Bigg( \sum_{i=a}^{b} c_{i}z_0^{b-i}z_1^{i-a}\Bigg),
\]
with $0 \le a \le b \le d$ and nonzero $c_a,c_b \in \mathbb{C}$. Then $\Delta_h$ is the closed interval
\[
\Delta_h = \left\{\begin{array}{ll}
\hspace{0.5mm} [a-1,b] & \text{if $a \neq 0$ and $b \neq d$},\\
\hspace{0.5mm} [a-1,b-1] & \text{if $a \neq 0$ and $b = d$},\\
\hspace{0.5mm} [a,b] & \text{if $a = 0$ and $b \neq d$},\\
\hspace{0.5mm} [a,b-1] & \text{if $a = 0$ and $b = d$}.
\end{array}\right.
\]
Note that the number of \emph{distinct} solutions $N$ of $h=0$ in $\mathbb{P}^1$ is at most
\[
N \le \left\{\begin{array}{ll}
\hspace{0.5mm} b-a+2 & \text{if $a \neq 0$ and $b \neq d$},\\
\hspace{0.5mm} b-a+1 & \text{if $a \neq 0$ and $b = d$},\\
\hspace{0.5mm} b-a+1 & \text{if $a = 0$ and $b \neq d$},\\
\hspace{0.5mm} b-a & \text{if $a = 0$ and $b = d$}
\end{array}\right.
\]
and that the equality is obtained in each case if the coefficients $c_i$ are chosen in a sufficiently general way. Since $D(h)$ is homotopic to the wedge of $N-1$ circles, the above inequalities may be written as
\[
b_1\big(D(h)\big) \le \text{MV}_1(\Delta_h).
\]
\end{example}

\begin{theorem}\label{bound}
Let $h$ be any homogeneous polynomial in $\mathbb{C}[z_0,\ldots,z_n]$. For $i=0,\ldots, n$, we have
\[
b_i\big(D(h)\big) \le \mu^i(h) \le \text{MV}_n(\underbrace{\Delta,\ldots,\Delta}_{n-i}, \underbrace{\Delta_h,\ldots,\Delta_h}_i),
\]
where $\Delta$ is the standard $n$-dimensional simplex in $\mathbb{R}^n$.
\end{theorem}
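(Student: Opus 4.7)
The theorem consists of two inequalities, which I would prove by separate methods.

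For the lower bound $b_i(D(h)) \le \mu^i(h)$, I would iterate Theorem \ref{A}(1) in $i$. Starting from $D(h)_{-1} = \varnothing$ and attaching $\mu^0(h), \mu^1(h), \ldots, \mu^n(h)$ cells in successive dimensions, the complement $D(h) = D(h)_n$ acquires the homotopy type of a CW complex whose cellular chain complex has exactly $\mu^i(h)$ generators in degree $i$. Since the $i$-th Betti number of a space is bounded above by the rank of the $i$-th term of any such cellular chain complex, this gives $b_i(D(h)) \le \mu^i(h)$.

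For the upper bound, my plan is to interpret $\mu^i(h)$ as a count of intersection points and then apply Bernstein's theorem. First, a Hilbert polynomial substitution comparing the bigraded algebra $R(\mathfrak{m}|J_h)$ with the $(n+1)$-graded algebra obtained by splitting $\mathfrak{m}$ and $J_h$ into $n-i$ and $i$ copies respectively yields
\[
\mu^i(h) = e_{(0, 1, \ldots, 1)}\big(\mathfrak{m} \,\big|\, \underbrace{\mathfrak{m}, \ldots, \mathfrak{m}}_{n-i}, \underbrace{J_h, \ldots, J_h}_{i}\big).
\]
The graded analogue of Theorem \ref{T} then identifies this multiplicity with $e\big(\mathfrak{m}, S/(Q:(\mathfrak{m}\cdots J_h)^\infty)\big)$, where $Q = (\ell_1, \ldots, \ell_{n-i}, f_1, \ldots, f_i)$ is generated by $n-i$ generic linear forms $\ell_j$ and $i$ generic $\mathbb{C}$-linear combinations $f_k = \sum_j c_{kj}\, \partial h/\partial z_j$ of the partials. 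Geometrically, $\mu^i(h)$ is the length of the zero-dimensional projective subscheme cut out in $\mathbb{P}^n$ by these $n$ equations, after removing components supported on $V(J_h)$.

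It remains to bound this geometric count by the mixed volume. In the affine chart $z_0 = 1$, each $\ell_j$ dehomogenizes to a generic affine-linear polynomial with Newton polytope $\Delta$, and each $f_k$ to a generic linear combination of the dehomogenized partials, whose Newton polytope lies in $\Delta_h$ (and equals $\Delta_h$ for generic $c_{kj}$). Bernstein's theorem then bounds the number of isolated common zeros in the algebraic torus $(\mathbb{C}^*)^n$ by $\text{MV}_n(\Delta, \ldots, \Delta, \Delta_h, \ldots, \Delta_h)$. The main obstacle will be the boundary analysis: because a generic linear change of coordinates on $\mathbb{P}^n$ would in general alter the Newton polytope $\Delta_h$, I cannot simply push isolated solutions into the open torus by such a change, and must work in the given coordinate system. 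I would overcome this by verifying that for generic $\ell_j$ and $c_{kj}$, the restriction of the system to any coordinate hyperplane $\{z_k = 0\}$ is overdetermined, so that no isolated points of the subscheme $V(Q) \setminus V(J_h)$ lie on the toric boundary $\mathbb{P}^n \setminus (\mathbb{C}^*)^n$, and Bernstein's torus count therefore controls the full projective length.
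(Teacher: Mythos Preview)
Your treatment of the lower bound is exactly the paper's: Theorem \ref{A} gives a CW model with $\mu^i(h)$ cells in dimension $i$, and the Betti number is bounded by the number of cells.

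For the upper bound, the paper takes a different and shorter route than yours. Instead of invoking Bernstein's theorem on the system $(\ell_1,\ldots,\ell_{n-i},f_1,\ldots,f_i)$, the paper introduces the \emph{monomial} ideal $K_h$ generated by all monomials occurring in some partial of $h$, so that $J_h \subseteq K_h$ and both are generated in the same degree. A monotonicity lemma for mixed multiplicities (Lemma \ref{monoton}) gives $e_i(\mathfrak{m}|J_h) \le e_i(\mathfrak{m}|K_h)$, and since $K_h$ is a monomial ideal with associated polytope $\Delta_h$, Trung and Verma's identification (Theorem \ref{mixed_volume}) yields $e_i(\mathfrak{m}|K_h) = \text{MV}_n(\Delta,\ldots,\Delta,\Delta_h,\ldots,\Delta_h)$ directly. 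No boundary analysis is needed: the passage to the monomial ideal absorbs all the toric geometry into an algebraic inequality.

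Your Bernstein approach is conceptually reasonable, but the boundary step is a genuine gap as stated. The claim that the restriction of the system to each coordinate hyperplane is ``overdetermined'' and hence has no solutions does not follow from genericity of the $\ell_j$ and $c_{kj}$ alone: the restricted $f_k$ are \emph{not} generic polynomials on the stratum, only generic linear combinations of the restricted partials, and these may become dependent or drop dimension there. You would need to show that any isolated point of $V(Q)$ lying on a coordinate subspace necessarily lies in $V(J_h)$ and is therefore killed by the saturation; this is plausible (and holds in the examples one tries), but it requires a stratum-by-stratum argument over all toric faces, including the hyperplane at infinity $\{z_0=0\}$, that you have not supplied. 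The paper's monotonicity argument sidesteps this entirely by comparing ideals rather than counting points.
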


\noindent Therefore, if $h$ has a small Newton polytope under some choice of coordinates, then the Betti numbers of $D(h)$ cannot be large.

\begin{example}\label{sharp}
Let $h$ be the product of variables $z_0 z_1 \cdots z_n \in \mathbb{C}[z_0,\ldots,z_n]$. Then $D(h)$ is the complex torus $(\mathbb{C}^*)^n$, so the Betti numbers are the binomial coefficients $\binom{n}{i}$. %We have
%\begin{eqnarray*}
%\Delta \hspace{1.7mm} &=& \text{conv} \big\{ (1,0,\ldots,0),(0,1,\ldots,0),\ldots,(0,\ldots,0,1),(0,0,\ldots,0) \big\} \\
%\Delta_h &=& \text{conv} \big\{ (0,1,\ldots,1), (1,0,\ldots,1),\ldots, (1,\ldots,1,0), (1,1,\ldots,1) \big\}
%\end{eqnarray*}
We compare the Betti numbers with the mixed volumes of $\Delta$ and $\Delta_h$. Since $\Delta_h$ is a translation of $-\Delta$, we may replace $\Delta_h$ by $-\Delta$ when computing the mixed volumes. For $I \subseteq [n]=\{1,\ldots,n\}$, write $\mathbb{R}^n_I$ for the orthant
\[
\mathbb{R}^n_I = \bigcap_{p,q} \hspace{1mm} \big\{x_p \ge 0, x_q \le 0 \big\} ,
\]
where the intersection is over all $p \in [n] \setminus I$ and $q \in I$.
Then, for any nonnegative $a$ and $b$,
\begin{eqnarray*}
a\Delta+(-b\Delta) &=& \bigcup_{I \subseteq [n]} \Big( a\Delta+(-b\Delta) \Big) \cap \mathbb{R}^n_I \\
& =& \bigcup_{I \subseteq [n]} \text{conv}\big\{\mathbf{0}, a\mathbf{e}_p,-b\mathbf{e}_q,a\mathbf{e}_p-b\mathbf{e}_q\big\}_{p \in [n] \setminus I, q \in I} \\
&=& \bigcup_{I \subseteq [n]} \Big( a\Delta_{|[n] \setminus I|} \times b\Delta_{|I|} \Big),
\end{eqnarray*}
where $\mathbf{e}_i$ are the standard unit vectors of $\mathbb{R}^n$ and $\Delta_k$ is the standard $k$-dimensional simplex. Therefore we have
\[
\text{V}_n\big(a\Delta+(-b\Delta)\big) = \sum_{I \subseteq [n]} \frac{a^{|[n] \setminus I|}b^{|I|}}{|[n] \setminus I|!|I|!} = \sum_{i=0}^n {n \choose i} \frac{a^{n-i}b^{i}}{(n-i)!i!}
\]
and hence
\[
\text{MV}_n(\underbrace{\Delta,\ldots,\Delta}_{n-i}, \underbrace{\Delta_h,\ldots,\Delta_h}_i) = \binom{n}{i}.
\]
We note that equality holds throughout in Theorem \ref{bound}, for all $i=0,\ldots,n$ and any $n \ge 1$.
\end{example}

\begin{example}
There are polytopes in $\mathbb{R}^{n+1}$ such that the second inequality of Theorem \ref{bound} is strict for some $i$ for all $h$ having the given polytope as the Newton polytope. For example, consider the homogeneous polynomial in $\mathbb{C}[z_0,z_1,z_2]$,
\[
h=z_1(c z_0z_1-c' z_2^2),
\]
where $c,c' \in \mathbb{C}$ are any nonzero constants. Then the inequalities of Theorem \ref{bound} for $i=2$ read
\[
%\begin{array}{ccccc}
%b_0\big(D(h)\big)=1&\le&\mu^0(h)=1&\le& MV_2(\Delta \hspace{1.5mm},\Delta \hspace{1.5mm})=1,\\
%b_1\big(D(h)\big)=1&\le&\mu^1(h)=2&\le& MV_2(\Delta \hspace{1.5mm},\Delta_h)=2,\\
%b_2\big(D(h)\big)=0 \le \mu^2(h)=1\le \text{MV}_2(\Delta_h,\Delta_h)=2.
%\end{array}
0 \le 1 \le 2.
\]
One can show that almost all homogeneous polynomials with a given Newton polytope share the numbers $\mu^i(h)$. An explicit formula for the numbers will appear elsewhere.
%I do not know whether the right-hand side of the second inequality of Theorem \ref{bound} can be replaced by an invariant of polytopes so that the equality holds for almost all homogeneous polynomials with a given Newton polytope.
\end{example}

\subsection{Representable homology classes of $\mathbb{P}^n \times \mathbb{P}^m$}

An \emph{algebraic variety} is a reduced and irreducible scheme of finite type over an algebraically closed field. Given an algebraic variety $X$, we pose the following question: which homology classes of $X$ can be represented by a subvariety? See \cite[Question 1.3]{Hartshorne} for a related discussion.

\begin{definition}
We say that $\xi \in A_*(X)$ is \emph{representable} if there is a subvariety $Z$ of $X$ with $\xi=[Z]$.
\end{definition}

Theorem \ref{B} asserts that representable homology classes of $\mathbb{P}^n \times \mathbb{P}^m$ correspond to log-concave sequences of nonnegative numbers with no internal zero. We start by giving two examples illustrating cases of exceptional nature.
 
\begin{example}
Let $X$ be the projective space $\mathbb{P}^n$. Write $\xi \in A_k(\mathbb{P}^n)$ as a multiple
\[
\xi=e \hspace{0.5mm} \big[\mathbb{P}^k \big]
\]
for some $e \in \mathbb{Z}$.
\begin{enumerate}[1.]
\item If $k=0$ or $k=n$, then $\xi$ is representable iff $e=1$.
\item If otherwise, then $\xi$ is representable iff $e \ge 1$.
\end{enumerate}
In the latter case, one may use an irreducible hypersurface of degree $e$ in $\mathbb{P}^{k+1} \subseteq \mathbb{P}^n$ to represent $\xi$.
\end{example}

\begin{example}
Let $X$ be the surface $\mathbb{P}^1 \times \mathbb{P}^1$. Write $\xi \in A_1(\mathbb{P}^1 \times \mathbb{P}^1)$ as a linear combination
\[
\xi=e_0 \big[\mathbb{P}^1 \times \mathbb{P}^0 \big]+e_1 \big[ \mathbb{P}^0 \times \mathbb{P}^1\big]
\]
for some $e_0,e_1 \in \mathbb{Z}$. 
\begin{enumerate}[1.]
\item If one of the $e_i$ is zero, then $\xi$ is representable iff the other $e_i$ is $1$.
\item If otherwise, then $\xi$ is representable iff both $e_i$ are positive.
\end{enumerate}
%In the latter case, one may use an irreducible bihomogeneous polynomial of degree $(e_0,e_1)$ to represent $\xi$. 
Note that in the former case, by the fundamental theorem of algebra, a nonconstant bihomogeneous polynomial of degree $(e_0,e_1)$ is reducible if one of the $e_i$ is zero, unless the other $e_i$ is $1$.
\end{example}

We characterize representable homology classes of $\mathbb{P}^n \times \mathbb{P}^m$ up to a positive integer multiple. 

\begin{theorem}\label{B}
Write $\xi \in A_k(\mathbb{P}^n \times \mathbb{P}^m)$ as an integral linear combination
\[
\xi =\sum_{i} e_i \big[\mathbb{P}^{k-i} \times \mathbb{P}^{i}\big],
\]
where the term containing $e_i$ is zero if $n<k-i$ or $m<i$. 
\begin{enumerate}[1.]
\item If $\xi$ is an integer multiple of either
\[
\big[\mathbb{P}^n \times \mathbb{P}^m\big], \big[\mathbb{P}^n \times \mathbb{P}^0\big], \big[\mathbb{P}^0 \times \mathbb{P}^m\big], \big[\mathbb{P}^0 \times \mathbb{P}^0\big],
\]
then $\xi$ is representable iff the integer is $1$.
\item If otherwise, some positive integer multiple of $\xi$ is representable iff the $e_i$ form a nonzero log-concave sequence of
nonnegative integers with no internal zeros.
\end{enumerate}
\end{theorem}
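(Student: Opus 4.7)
The plan is to handle the two parts separately. Part~(1) is a direct geometric check. Part~(2) splits into a necessity half, which follows from the Khovanskii-Teissier inequality applied to the two tautological line bundles, and a sufficiency half, which realizes prescribed log-concave sequences as classes of graphs of rational maps via the dictionary of Remark~\ref{projective degree}. For Part~(1), let $Z$ be an irreducible representative of an integer multiple~$c$ of one of the four listed classes. If the class is the fundamental class $[\mathbb{P}^n\times\mathbb{P}^m]$ or the point class $[\mathbb{P}^0\times\mathbb{P}^0]$, then $Z$ is forced to be the whole space or a single point, giving $c=1$. If $[Z]=c\,[\mathbb{P}^n\times\mathbb{P}^0]$, then $\dim Z=n$ and $Z\cdot\pi_2^*\mathcal{O}(1)=0$, so $\pi_2(Z)$ is a single point and $Z=\mathbb{P}^n\times\{q\}$; the case $[\mathbb{P}^0\times\mathbb{P}^m]$ is symmetric.

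\smallskip

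\emph{Necessity in part (2).} Suppose $d\xi=[Z]$ with $Z\subseteq\mathbb{P}^n\times\mathbb{P}^m$ irreducible of dimension~$k$. The line bundles $L_j=\pi_j^*\mathcal{O}(1)|_Z$ are globally generated, hence nef, and by intersecting with complementary cycles one obtains $(L_1^{k-i}\cdot L_2^{i})_Z=de_i\ge 0$. The Khovanskii-Teissier inequality for two nef classes on an irreducible projective variety gives
\[
(L_1^{k-i}L_2^{i})_Z^{\,2} \ \ge\ (L_1^{k-i+1}L_2^{i-1})_Z\cdot(L_1^{k-i-1}L_2^{i+1})_Z,
\]
which upon dividing by $d^2$ is exactly log-concavity of the~$e_i$. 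A nonnegative log-concave sequence automatically has no internal zeros, since $e_i=0$ would force $e_{i-1}e_{i+1}=0$ by the defining inequality.

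\smallskip

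\emph{Sufficiency in part (2).} If only one $e_i$ is nonzero, then $\xi=e_i[\mathbb{P}^{k-i}\times\mathbb{P}^i]$, and the non-exceptional hypothesis leaves room to take the product of an irreducible degree-$e_i$ hypersurface in a linear $\mathbb{P}^{k-i+1}\subseteq\mathbb{P}^n$ with a linear $\mathbb{P}^i\subseteq\mathbb{P}^m$. If at least two of the $e_i$ are nonzero, the plan is to realize some positive multiple of $\xi$ as the class of the graph of a rational map $\varphi_J:\mathbb{P}^k\dashrightarrow\mathbb{P}^{m'}$ defined by equal-degree monomials generating an ideal~$J$. By Remark~\ref{projective degree}, the class of $\Gamma_J\subseteq\mathbb{P}^k\times\mathbb{P}^{m'}$ is $\sum_i e_i(\mathfrak{m}\mid J)\,[\mathbb{P}^{k-i}\times\mathbb{P}^i]$, and by Theorem~\ref{mixed_volume} the mixed multiplicity $e_i(\mathfrak{m}\mid J)$ equals the mixed volume $\mathrm{MV}_k(\underbrace{\Delta,\ldots,\Delta}_{k-i},\underbrace{P,\ldots,P}_{i})$, where $P$ is the Newton polytope associated with~$J$. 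The task thus reduces to the convex-geometric problem of finding a lattice polytope $P\subseteq\mathbb{R}^k$ and a positive integer $\lambda$ with $\mathrm{MV}_k(\Delta^{k-i},P^{i})=\lambda e_i$ for all~$i$, after which $\Gamma_J$ is pushed forward along linear embeddings $\mathbb{P}^k\hookrightarrow\mathbb{P}^n$ and $\mathbb{P}^{m'}\hookrightarrow\mathbb{P}^m$.

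\smallskip

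The main obstacle is this last convex-geometric realization step: one must extract from Shephard's work on inequalities between mixed volumes of convex bodies the precise statement that every nonzero log-concave sequence of nonnegative integers with no internal zeros arises, up to a common positive integer multiple, as the sequence $\mathrm{MV}_k(\Delta^{k-i},P^{i})$ for some lattice polytope $P$ of suitable dimension, together with a rational-approximation argument to pass from general convex bodies to lattice polytopes. The exceptional list in part~(1) reappears naturally here as the cases where the realizing body~$P$ necessarily degenerates and the two-body mixed-volume construction must be replaced by the direct hypersurface construction of the previous paragraph.
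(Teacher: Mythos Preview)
Your overall architecture matches the paper's: Part~(1) via the projection argument (this is Lemma~\ref{exceptional}), necessity via Khovanskii--Teissier on the two pulled-back hyperplane classes (Lemma~\ref{TK}), and sufficiency via graphs of monomial maps and Shephard's mixed-volume computation (Lemma~\ref{Shephard}). But there is one genuine error in the necessity half and one incomplete case in the sufficiency half.

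\textbf{The real gap: ``no internal zeros'' does not follow from log-concavity alone.} You write that a nonnegative log-concave sequence automatically has no internal zeros because $e_i=0$ forces $e_{i-1}e_{i+1}=0$. The implication $e_i=0\Rightarrow e_{i-1}e_{i+1}=0$ is correct, but it does not rule out internal zeros: the sequence $1,0,0,1$ is nonnegative and log-concave (all the inequalities $e_{i-1}e_{i+1}\le e_i^2$ read $0\le 0$), yet it has two internal zeros. The paper closes this gap by a limit argument: by Kleiman's theorem the nef classes $L_1,L_2$ on $Z$ are limits of ample $\mathbb{R}$-classes, for which all intersection numbers are strictly positive; hence $(e_i)$ is a limit of log-concave sequences of \emph{positive} reals, and Lemma~\ref{no internal zeros} identifies that closure as exactly the nonnegative log-concave sequences with no internal zeros. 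You need some such argument; nonnegativity plus the defining inequality is not enough.

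\textbf{A smaller gap in the single-nonzero-$e_i$ case.} When only $e_p\neq 0$ you propose a degree-$e_p$ hypersurface in $\mathbb{P}^{k-p+1}\subseteq\mathbb{P}^n$ times $\mathbb{P}^p\subseteq\mathbb{P}^m$. This needs $k-p+1\le n$, i.e.\ $k-p<n$, which can fail (e.g.\ $k-p=n$). The paper observes that in the non-exceptional situation one has either $0<k-p<n$ or $0<p<m$, and in the second case one instead takes an irreducible bidegree-$(0,e_p)$ hypersurface in $\mathbb{P}^{k-p}\times\mathbb{P}^{p+1}$. You should split into these two subcases.

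On the sufficiency side your plan is otherwise the paper's plan; the ``main obstacle'' you flag is resolved concretely by taking $P=\Delta_\lambda=\mathrm{conv}\{\mathbf{0},\lambda_1\mathbf{e}_1,\ldots,\lambda_n\mathbf{e}_n\}$ with $\lambda_i=e\,e_i/e_{i-1}$ for a suitable common multiple $e$, so that $\mathrm{MV}_n(\Delta^{n-i},\Delta_\lambda^i)=\lambda_1\cdots\lambda_i=e^i e_i/e_0$ by Lemma~\ref{Shephard}; a further degree-$e$ self-map of the first factor then rescales to a genuine multiple of $\xi$, followed by cones and linear embeddings to land in $\mathbb{P}^n\times\mathbb{P}^m$.
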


%Let $J$ be an ideal of a standard graded domain $S$ over an algebraically closed field with the irrelevant ideal $\mathfrak{m}$. If $J$ is generated by elements of the same degree $r$, then the multiplication
%\[
%\mathfrak{m}^{u_1}J^{v_1}/\mathfrak{m}^{u_1+1}J^{v_1} \times  \mathfrak{m}^{u_2}J^{v_2}/\mathfrak{m}^{u_2+1}J^{v_2} \longrightarrow \mathfrak{m}^{u_1+u_2}J^{v_1+v_2}/\mathfrak{m}^{u_1+u_2+1}J^{v_1+v_2}
%\]
%is non-degenerate because an element in the image has a homogeneous representative of degree $u_1+u_2+r(v_1+v_2)$. It follows that the algebra
%\[
%R=R(\mathfrak{m} | J) = \bigoplus_{(u,v) \in \mathbb{N}^2} \mathfrak{m}^u J^v / \mathfrak{m}^{u+1} J^v
%\]
%is an integral domain and the corresponding subscheme of $\mathbb{P}^n \times \mathbb{P}^m$ is a variety. 

\noindent Combined with Remark \ref{projective degree}, Theorem \ref{B} implies the following.

\begin{corollary}\label{mixed-multiplicity}
If $J$ is an ideal of a standard graded domain over an algebraically closed field generated by elements of the same degree, then the mixed multiplicities of $\mathfrak{m}$ and $J$ form a log-concave sequence of nonnegative integers with no internal zeros.
\end{corollary}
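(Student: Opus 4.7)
The plan is to identify the sequence of mixed multiplicities with the coefficients of the class of an irreducible subvariety of $\mathbb{P}^n \times \mathbb{P}^m$ and then invoke Theorem \ref{B} directly.

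Let $S$ be a standard graded domain over an algebraically closed field, $\mathfrak{m}$ its irrelevant ideal, and $J=(h_0,\ldots,h_m)$ an ideal generated by nonzero homogeneous elements of the same degree. Set $X=\operatorname{Proj}(S)$, embedded as an irreducible projective variety $X\subseteq\mathbb{P}^n$ via the linear generators of $S$, and put $k=\dim X$. Consider the rational map $\varphi_J:X\dashrightarrow\mathbb{P}^m$, $p\mapsto(h_0(p):\cdots:h_m(p))$, and let $\Gamma_J\subseteq\mathbb{P}^n\times\mathbb{P}^m$ be its graph closure. Because $X$ is irreducible (by the domain hypothesis), the domain of definition $U\subseteq X$ of $\varphi_J$ is irreducible, and the graph of $\varphi_J|_U$ is isomorphic to $U$ via projection, so it is irreducible; its closure $\Gamma_J$ is therefore an irreducible closed subvariety. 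Thus the class $\xi=[\Gamma_J]\in A_k(\mathbb{P}^n\times\mathbb{P}^m)$ is representable in the sense of Theorem \ref{B}.

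By Remark \ref{projective degree}, writing $\xi=\sum_i e_i[\mathbb{P}^{k-i}\times\mathbb{P}^i]$, the coefficients $e_i$ are exactly the mixed multiplicities of $\mathfrak{m}$ and $J$. I would then apply Theorem \ref{B} to $\xi$ and split into cases. If $\xi$ is \emph{not} an integer multiple of any of $[\mathbb{P}^n\times\mathbb{P}^m]$, $[\mathbb{P}^n\times\mathbb{P}^0]$, $[\mathbb{P}^0\times\mathbb{P}^m]$, $[\mathbb{P}^0\times\mathbb{P}^0]$, Theorem \ref{B} gives immediately that the $e_i$ form a nonzero log-concave sequence of nonnegative integers with no internal zeros. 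In any of the four exceptional cases, the same theorem forces the integer multiple to equal $1$, so the mixed multiplicity sequence has a single nonzero entry equal to $1$, which is trivially log-concave with no internal zeros.

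The proof is essentially a translation between the algebraic language of mixed multiplicities and the geometric language of the graph of a rational map; the substantive content is packaged in Theorem \ref{B}. The only step requiring care is verifying that $\Gamma_J$ is irreducible, which is precisely where the hypothesis that $S$ is a domain enters, and I foresee no further obstacle.
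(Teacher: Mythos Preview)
Your proof is correct and follows exactly the route the paper indicates: the paper simply states that the corollary follows by combining Remark \ref{projective degree} with Theorem \ref{B}, and you have spelled this out, including the observation that the domain hypothesis guarantees irreducibility of $\Gamma_J$ and the verification that the four exceptional cases of Theorem \ref{B} still yield a log-concave sequence with no internal zeros.
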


\begin{example}\label{counter-example}
We show by example that the anwer to the following question of Trung and Verma \cite[Question 2.7]{Trung-Verma} is no in general.

\vspace{3mm}

\noindent \emph{Let $A$ be a local ring of dimension $n+1 \ge 3$, $J_1,\ldots,J_n$ be ideals of height $n$, and $\mathbf{i}=(0,1,\ldots,1)$. Is it true that
\[
e_{\mathbf{i}}(\mathfrak{m}|J_1,J_1,J_3,\ldots,J_n) e_{\mathbf{i}}(\mathfrak{m}|J_2,J_2,J_3,\ldots,J_n) \le e_{\mathbf{i}}(\mathfrak{m}|J_1,J_2,J_3,\ldots,J_n)^2?
\]}

\vspace{3mm}

\noindent Let $A$ be the power series ring $\mathbb{C}\{x,y,z\}$, $J_1 = (xy^2,y^3z,xz)$ and $J_2=(xy^2,y^3z,xz^2)$ ideals of $A$. Then $\text{ht}(J_1)=\text{ht}(J_2)=2$. However, using Theorem \ref{T} one computes
\begin{eqnarray*}
e_{(0,1,1)}(\mathfrak{m} | J_1,J_1) &=& 1, \\
e_{(0,1,1)}(\mathfrak{m} | J_1,J_2) &=& 1, \\
e_{(0,1,1)}(\mathfrak{m} | J_2,J_2 ) &=& 2 .
\end{eqnarray*}
More precisely, writing $\square$ for sufficiently general nonzero constants in $\mathbb{C}$,
\begin{eqnarray*}
e_{(0,1,1)}(\mathfrak{m} | J_1,J_1) &=& e\big(\mathfrak{m}, A/ (\square xy^2+\square y^3z +\square xz,\square xy^2+\square y^3z +\square xz) : J_1^\infty \big)\\
&=& e\big(\mathfrak{m}, A/ (\square xy^2 +\square xz,\square y^3z +\square xz) : J_1^\infty \big)\\
&=& e\big(\mathfrak{m}, A/ (\square y^2 +\square z,\square y^3 +\square x) : J_1^\infty \big)\\
&=& e\big(\mathfrak{m}, A/ (\square y^2 +\square z,\square y^3 +\square x) \big)\\
&=& 1,
\end{eqnarray*}
\begin{eqnarray*}
e_{(0,1,1)}(\mathfrak{m} | J_1,J_2) &=& e\big(\mathfrak{m}, A/ (\square xy^2+\square y^3z +\square xz,\square xy^2+\square y^3z +\square xz^2) : J_1J_2^\infty \big)\\
&=& e\big(\mathfrak{m}, A/ (\square xy^2+\square xz+\square xz^2,\square y^3z +\square xz^2+\square xz) : J_1J_2^\infty \big)\\
&=& e\big(\mathfrak{m}, A/ (\square y^2+\square z+\square z^2,\square y^3 +\square xz+\square x) : J_1J_2^\infty \big)\\
&=& e\big(\mathfrak{m}, A/ (\square y^2+\square z+\square z^2,\square y^3 +\square xz+\square x) \big)\\
&=& 1,
\end{eqnarray*}
\begin{eqnarray*}
e_{(0,1,1)}(\mathfrak{m} | J_2,J_2) &=& e\big(\mathfrak{m}, A/ (\square xy^2+\square y^3z +\square xz^2,\square xy^2+\square y^3z +\square xz^2) : J_2^\infty \big)\\
&=& e\big(\mathfrak{m}, A/ (\square xy^2+\square xz^2,\square y^3z +\square xz^2) : J_2^\infty \big)\\
&=& e\big(\mathfrak{m}, A/ (\square y^2+\square z^2,\square y^3 +\square xz) : J_2^\infty \big)\\
&=& e\big(\mathfrak{m}, A/ (\square y^2+\square z^2,\square yz^2 +\square xz) : J_2^\infty \big)\\
&=& e\big(\mathfrak{m}, A/ (\square y^2+\square z^2,\square yz+\square x) : J_2^\infty \big)\\
&=& e\big(\mathfrak{m}, A/ (\square y^2+\square z^2,\square yz+\square x) \big)\\
&=& 2.
\end{eqnarray*}
In each of the three computations above, the first equality is an application of Theorem \ref{T}, the second is a Gaussian elimination, and the third is a result of saturation. The same technique is used twice in the computation of $e_{(0,1,1)}(\mathfrak{m} | J_2,J_2)$.
\end{example}

\subsection{Log-concavity of characteristic polynomials}\label{Graphic-Arrangement}

Suppose $h \in \mathbb{C}[z_0,\ldots,z_n]$ is a product of linear forms.  Let $\widetilde{\mathcal{A}} \subset \mathbb{C}^{n+1}$ be the central hyperplane arrangement defined by $h$ and $\mathcal{A} \subset \mathbb{P}^n$ be the corresponding projective arrangement. 

\begin{definition}
Let $H$ be a hyperplane in $\mathcal{A}$. The \emph{decone} of $\mathcal{A}$ is an affine arrangement
\[
\overline{\mathcal{A}} = \overline{\mathcal{A}}^H \subset \mathbb{C}^n
\]
obtained from $\mathcal{A}$ by declaring $H$ to be the hyperplane at infinity.
%The \emph{reduced characteristic polynomial} of $\mathcal{A}$ is
%\[
%\overline{\chi}_\mathcal{A}(q) = \chi_\mathcal{A}(q)/(q-1).
%\]
\end{definition}

\noindent The lattice of flats $\mathscr{L}_{\overline{A}}$ is isomorphic to the sublattice of $\mathscr{L}_{\widetilde{A}}$ consisting of all the flats not contained in the hyperplane $H$. It follows from the modular element factorization \cite[Corollary 4.8]{Stanley3} that
\[
\chi_{\overline{\mathcal{A}}}(q) = \chi_{\widetilde{\mathcal{A}}}(q)/(q-1).
\]

\begin{corollary}\label{formula}
We have
\[
\chi_{\overline{\mathcal{A}}}(q)= \sum_{i=0}^n (-1)^i b_i\big(D(h)\big) q^{n-i}  = \sum_{i=0}^n (-1)^i \mu^i(h) q^{n-i}. 
\]
\end{corollary}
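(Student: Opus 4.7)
The plan is to prove the two claimed equalities separately. The right-hand equality $b_i(D(h)) = \mu^i(h)$ will follow from Theorem \ref{A}(1) combined with the minimality theorem of Dimca and Papadima. The left-hand equality will follow from the Orlik-Solomon formula for the Poincar\'e polynomial of an arrangement complement, together with the modular factorization $\chi_{\widetilde{\mathcal{A}}}(q) = (q-1)\chi_{\overline{\mathcal{A}}}(q)$ recalled just above.

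For the right-hand equality, I apply Theorem \ref{A}(1) iteratively along the flag $\mathbb{P}^0 \subset \cdots \subset \mathbb{P}^n$ to obtain a CW model for $D(h)$ with exactly $\mu^i(h)$ cells in each dimension $i$. This yields the inequality $b_i(D(h)) \le \mu^i(h)$ for any $h$. When $h$ is a product of linear forms, however, the CW model built by Theorem \ref{A} is precisely the one used by Dimca and Papadima \cite{Dimca-Papadima} to prove that arrangement complements are minimal. Minimality means all cellular boundary maps vanish, so $b_i(D(h)) = \mu^i(h)$.

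For the left-hand equality, I use that the affine complement $\mathbb{C}^{n+1} \setminus V(h)$ is a trivial $\mathbb{C}^\ast$-bundle over $D(h)$, so its Poincar\'e polynomial factors as $(1+t)\,P(D(h),t)$. By Orlik-Solomon, the same Poincar\'e polynomial equals $(-t)^{n+1}\chi_{\widetilde{\mathcal{A}}}(-1/t)$. Substituting $\chi_{\widetilde{\mathcal{A}}}(q) = (q-1)\chi_{\overline{\mathcal{A}}}(q)$ and simplifying yields
\[
(1+t)\,P(D(h),t) \;=\; (1+t)(-t)^n\,\chi_{\overline{\mathcal{A}}}(-1/t),
\]
so $P(D(h),t) = (-t)^n\chi_{\overline{\mathcal{A}}}(-1/t)$. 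Writing $\chi_{\overline{\mathcal{A}}}(q) = \sum_{i=0}^n (-1)^i a_i q^{n-i}$ and comparing coefficients of $t^i$ on the two sides gives $a_i = b_i(D(h))$, which is the asserted identity.

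The only delicate point is the identification of the CW structure produced by Theorem \ref{A} with the one whose minimality is established in \cite{Dimca-Papadima}. The author flags this identification explicitly in item (2) of the introduction, so no further work is required there; everything else reduces to the routine Orlik-Solomon sign bookkeeping indicated above.
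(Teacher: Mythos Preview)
Your proposal is correct but takes a different route from the paper for the equality $b_i(D(h))=\mu^i(h)$. Rather than citing the Dimca--Papadima minimality theorem, the paper adapts Randell's argument: a generic $\mathbb{P}^k$ is transversal to all flats of $\mathcal{A}$, so the lattice of $\overline{\mathcal{A}|_{\mathbb{P}^k}}$ is the corank-$\le k$ truncation of $\mathscr{L}_{\overline{\mathcal{A}}}$, whence $\chi_{\overline{\mathcal{A}|_{\mathbb{P}^k}}}$ is the truncation of $\chi_{\overline{\mathcal{A}}}$ and Orlik--Solomon gives $b_k(D(h)\cap\mathbb{P}^k)=b_k(D(h))$. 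Theorem~\ref{A} then builds $D(h)\cap\mathbb{P}^{k+1}$ from $D(h)\cap\mathbb{P}^k$ by attaching $\mu^{k+1}(h)$ cells of dimension $k+1$; since $b_k$ is unchanged by this attachment, the attaching maps are homologically trivial, giving $b_{k+1}(D(h))=b_{k+1}(D(h)\cap\mathbb{P}^{k+1})=\mu^{k+1}(h)$. Your route is shorter because it outsources exactly this step to \cite{Dimca-Papadima}. One caution on phrasing: knowing that the \emph{space} $D(h)$ is minimal would not by itself force the boundary maps in the CW model of Theorem~\ref{A} to vanish --- a minimal space can carry non-minimal CW structures --- so what you are really invoking is that \cite{Dimca-Papadima} show \emph{this particular} model is minimal, which they do via essentially the same Lefschetz/truncation mechanism the paper spells out. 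For the first equality both approaches use Orlik--Solomon, with only cosmetic differences (you pass through the central arrangement and the $\mathbb{C}^*$-bundle; the paper applies it directly to $\overline{\mathcal{A}}$).
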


\begin{proof}
%at $D(h)$ is \emph{minimal}. More precisely, we show
%\[
%b_i\big(D(h)\big) = \mu^i(h).
%\]
%By a theorem of Orlik and Solomon \cite[Theorem X.x]{Orlik-Terao},
%\[
%\chi_{\widetilde{\mathcal{A}}}(q) = \sum_{i=0}^{n+1} (-1)^i b_i\big(\mathbb{C}^{n+1} \setminus \widetilde{\mathcal{A}} \hspace{1mm}\big) q^{n+1-i}.
%\]
%If $H$ is the hyperplane at infinity defining $\overline{A}$, the lattice of flats $\mathcal{L}_{\overline{A}}$ is the sublattice of $\mathcal{L}_{\widetilde{A}}$ generated by all the atoms other than the one corresponding to $H$. It follows from \cite[Theorem X.x]{Stanley3}
%\[
%\chi_{\overline{\mathcal{A}}}(q) = \chi_{\widetilde{\mathcal{A}}}(q)/(q-1).
%\]
%On the other hand, $\mathbb{C}^{n+1} \setminus \widetilde{\mathcal{A}}$ fibers over $\mathbb{C}^n \setminus \overline{\mathcal{A}} = D(h)$ with the fiber $\mathbb{C}^*$, 
The first equality is a theorem of Orlik and Solomon applied to the affine arrangement $\overline{A} \subset \mathbb{C}^n$ \cite{Orlik-Solomon}. We adapt an argument of Randell \cite{Randell} to prove the second equality. Fix a sufficiently general flag of linear subspaces
\[
\mathbb{P}^0 \subset \mathbb{P}^1 \subset \cdots \subset \mathbb{P}^{n-1} \subset \mathbb{P}^n
\]
and a nonnegative integer $k < n$. If the linear subspace $\mathbb{P}^k$ is transversal to all the flats of $\mathcal{A}$ of relevant dimensions, then the lattice of flats $\mathscr{L}_{\overline{\mathcal{A}|_{\mathbb{P}^k}}}$ is isomorphic to the sublattice of $\mathscr{L}_{\overline{\mathcal{A}}}$ consisting of all the flats of codimension $\le k$. It follows that $\chi_{\overline{\mathcal{A}|_{\mathbb{P}^k}}}(q)$ is a truncation of $\chi_{\overline{\mathcal{A}}}(q)$. Combined with the first equality, we have
\[
\chi_{\overline{\mathcal{A}|_{\mathbb{P}^k}}}(q)  = \sum_{i=0}^{k} (-1)^i b_i\big(D(h)\big) q^{k-i}.
\]
In particular, $b_k\big(D(h) \cap \mathbb{P}^k\big) = b_k\big( D(h)\big)$. If furthermore $\mathbb{P}^k$ is chosen so that it satisfies the genericity assumption of Theorem \ref{A}, then $D(h) \cap \mathbb{P}^{k+1}$ is obtained from $D(h) \cap \mathbb{P}^k$ by attaching $\mu^{k+1}(h)$ cells of dimension $k+1$. Since the attaching does not alter the $k$-th Betti number, this attaching map should be homologically trivial. Therefore 
\[
b_{k+1}\big( D(h)\big) = b_{k+1}\big(D(h) \cap \mathbb{P}^{k+1}\big) = \mu^{k+1}(h).
\]
\end{proof}

\begin{remark}
Using the additivity of the Chern-Schwartz-MacPherson class \cite{MacPherson}, it is possible to prove the equality
\[
\chi_{\overline{\mathcal{A}}}(q)=  \sum_{i=0}^n (-1)^i \mu^i(h) q^{n-i}
\]
without using Theorem \ref{A} nor the theorem of Orlik and Solomon. Example \ref{mu-one} shows that the equality holds when $n=1$. 
Therefore, by induction on the dimension, it suffices to show that both polynomials satisfy the same recursive formula for a \emph{triple} of affine arrangements $(\overline{\mathcal{A}},\overline{\mathcal{A}}',\overline{\mathcal{A}}'')$ \cite[Definition 1.14]{Orlik-Terao}. Let $h,h',h''$ be homogeneous polynomials corresponding to the triple. Then by the additivity of the Chern-Schwartz-MacPherson class,
\begin{eqnarray*}
c_{SM}\big(\mathbf{1}_{D(h)}\big) &=& c_{SM}\big(\mathbf{1}_{D(h')} - \iota_*\mathbf{1}_{D(h'')}\big)\\
&=& c_{SM}\big(\mathbf{1}_{D(h')}\big) - c_{SM}\big(\iota_*\mathbf{1}_{D(h'')}\big)\\
&=& c_{SM}\big(\mathbf{1}_{D(h')}\big) - \iota_* c_{SM}\big(\mathbf{1}_{D(h'')}\big),
\end{eqnarray*}
where $\mathbf{1}$ stands for the characteristic function and $\iota$ is the inclusion of the distinguished hyperplane into $\mathbb{P}^n$. Using the formula of Remark \ref{CSM-class}, this implies that
\[
\mu^i(h) = \mu^i(h')+\mu^{i-1}(h'')
\]
for $0< i \le n$, which exactly corresponds to the inductive formula of Brylawski and Zaslavsky for the characteristic polynomial of triples \cite[Theorem 2.56]{Orlik-Terao}.
\end{remark}

\begin{corollary}\label{main}
If $M$ is representable over a field of characteristic zero, then the coefficients of $\chi_M(q)$ form a sign-alternating log-concave sequence of integers with no internal zeros.
\end{corollary}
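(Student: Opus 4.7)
The plan is to combine Corollary \ref{formula} with the log-concavity of mixed multiplicities provided by Corollary \ref{mixed-multiplicity}, and then to push the resulting log-concavity through the factor $(q-1)$.

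First I would reduce to the case $k=\mathbb{C}$. If $M$ is representable over a characteristic zero field $k$, the matrix entries of a representation generate a subfield of $k$ that is finitely generated over $\mathbb{Q}$, and such a subfield embeds into $\mathbb{C}$ (choose a transcendence basis, send it to algebraically independent complex numbers, then extend to an algebraic closure). Hence $M$ arises as the matroid of an essential central hyperplane arrangement $\widetilde{\mathcal{A}} \subset \mathbb{C}^{r}$ with $r=\text{rank}(M)$. Let $h \in \mathbb{C}[z_0,\ldots,z_{r-1}]$ be the product of the defining linear forms and $\overline{\mathcal{A}}$ a decone. Then
\[
\chi_M(q) = \chi_{\widetilde{\mathcal{A}}}(q) = (q-1)\,\chi_{\overline{\mathcal{A}}}(q) = (q-1)\sum_{i=0}^{r-1} (-1)^i \mu^i(h)\, q^{r-1-i},
\]
the last equality being Corollary \ref{formula}. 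By Corollary \ref{mixed-multiplicity} applied to the Jacobian ideal $J_h \subset \mathbb{C}[z_0,\ldots,z_{r-1}]$, which is generated by polynomials of the same degree $\deg h - 1$, the sequence $w_i := \mu^i(h)$, $i=0,\ldots,r-1$, consists of nonnegative integers, is log-concave, and has no internal zeros.

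It remains to check that multiplying $\sum_{i=0}^{r-1} (-1)^i w_i q^{r-1-i}$ by $(q-1)$ yields a sign-alternating polynomial whose sequence of absolute values is again log-concave with no internal zeros. A direct expansion gives the new absolute-value sequence
\[
w_0,\; w_0+w_1,\; w_1+w_2,\; \ldots,\; w_{r-2}+w_{r-1},\; w_{r-1}.
\]
Setting $b_i := w_{i-1}+w_i$ with $w_{-1}=w_r=0$, the inequality $b_i^2 \ge b_{i-1}b_{i+1}$ expands to $(w_{i-1}+w_i)^2 \ge (w_{i-2}+w_{i-1})(w_i+w_{i+1})$, which reduces, after invoking $w_{i-1}^2 \ge w_{i-2}w_i$ and $w_i^2 \ge w_{i-1}w_{i+1}$, to
\[
w_{i-1}w_i \ge w_{i-2}w_{i+1}.
\]
When all four of $w_{i-2},w_{i-1},w_i,w_{i+1}$ are positive this follows by chaining the ratio inequalities $w_{i+1}/w_i \le w_i/w_{i-1} \le w_{i-1}/w_{i-2}$, and the remaining cases either make the right-hand side zero or would create an internal zero in $(w_i)$; the corresponding verification of no internal zeros for $(b_i)$ is immediate.

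All conceptual difficulty lies in Theorem \ref{B} and its Corollary \ref{mixed-multiplicity}; the only step that requires care in this corollary is the elementary combinatorial bookkeeping of carrying log-concavity with no internal zeros through multiplication by $(q-1)$, which is the mild point of potential difficulty outlined above.
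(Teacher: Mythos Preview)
Your proof is correct and follows the same route as the paper's: reduce to $\mathbb{C}$, combine Corollary~\ref{formula} with Corollary~\ref{mixed-multiplicity}, and then push log-concavity with no internal zeros through the factor $(q-1)$. The only difference is in the reduction step---the paper invokes the completeness of $\text{ACF}_0$, whereas you use the more elementary fact that a finitely generated extension of $\mathbb{Q}$ embeds in $\mathbb{C}$; for the final step the paper simply cites that convolutions of log-concave sequences are log-concave, and your explicit computation is the special case of convolving with $(1,1)$.
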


\begin{proof}
Suppose $M$ is representable over $\mathbb{C}$. Let $\mathcal{A} \subset \mathbb{P}^n$ and $\widetilde{\mathcal{A}} \subset \mathbb{C}^{n+1}$ be the projective and the central arrangements, respectively, representing $M$. If $\overline{\mathcal{A}}$ is a decone of $\mathcal{A}$, then
\[
\chi_M(q) = \chi_{\widetilde{\mathcal{A}}}(q) = (q-1) \chi_{\overline{\mathcal{A}}}(q).
\]
Corollary \ref{mixed-multiplicity} together with Corollary \ref{formula} says that the absolute values of the coefficients of $ \chi_{\overline{\mathcal{A}}}(q)$ form a log-concave sequence of nonnegative integers with no internal zeros, and hence the same for $\chi_M(q)$. (In general, the convolution product of two log-concave sequences is again log-concave. It is easy to check this directly in our case.) This shows that the assertion holds for matroids representable over $\mathbb{C}$. 

We claim that simple matroids representable over a field of characteristic zero are in fact representable over $\mathbb{C}$. For this we check that matroid representability can be expressed in a first-order sentence in the language of fields and appeal to the completeness of $\text{ACF}_0$ \cite[Corollary 3.2.3]{Marker}. If $M$ is a simple matroid of rank $r$ on a set $E$ of cardinality $n$, then $M$ is representable over a field $\mathbb{K}$ if and only if the following formula is valid over $\mathbb{K}$ \cite[Section 9.1]{Welsh}: There are $n$ column vectors of length $r$ labelled by the elements of $E$, where a subset of $E$ is independent if and only if the corresponding set of vectors is linearly independent.

%One such sentence is provided by Tutte's coordinatization theorem \cite[Theorem 9.8.2]{Welsh}. If $M$ is a simple matroid on a finite set $S$, introduce variables $x_1,x_2,x_3$, and
%\[
%\big\{y_{H,s} \mid \text{$H$ is a copoint of $M$, $s\in S$} \big\}.
%\]
%Define a set of formulas as follows:
%\begin{enumerate}[1.]
%\item For each copoint $H$ and $s \in S$,
%\[
%\varphi_{H,s} := \left\{\begin{array}{cl} y_{H,s} = 0 & \text{if $s \in H$}, \\ y_{H,s} \neq 0 &\text{if $s \notin H$}.\end{array}\right. 
%\]
%\item For each triple of copoints $H_1,H_2,H_3$ meeting in a coline and $s \in S$,
%\[
%\phi_{H_1,H_2,H_3,s} := \big( x_1y_{H_1,s}  + x_2y_{H_2,s}  +  x_3y_{H_3,s}  =0 \big).
%\]
%\item
%\[
%\psi := (x_1 \neq 0 ) \wedge (x_2 \neq 0 ) \wedge (x_3 \neq 0 ).
%\] 
%\end{enumerate}
%For each copoint $H$ and $s \in S$, define formulas
%\[
%\sigma_{H,s} := \left\{\begin{array}{cl} y_{H,s} = 0 & \text{if $s \in H$}, \\ y_{H,s} \neq 0 &\text{if $s \notin H$}.\end{array}\right. 
%\]
%Also, for each triple of copoints $H_1,H_2,H_3$ meeting in a coline and $s \in S$, set
%\[
%\psi_{H_1,H_2,H_3,s} := \big( x_1y_{H_1,s}  + x_2y_{H_2,s}  +  x_3y_{H_3,s}  =0 \big)
%\]
%and
%\[
%\varphi := (x_1 \neq 0 ) \wedge (x_2 \neq 0 ) \wedge (x_3 \neq 0 ).
%\] 
%Then the coordinatization theorem says that $M$ is representable over a field $\mathbb{K}$ iff there exist $x_1,x_2,x_3,$ and $y_{H,s}$'s in $\mathbb{K}$ satisfying all the above formulas.
%By Hilbert's Nullstellensatz, if a system of polynomial equations has a solution over a field of characteristic zero, it has a solution over $\mathbb{C}$. This proves the claim.
\end{proof}

\section{Milnor numbers of projective hypersurfaces}

\subsection{Proof of Theorem \ref{A}}

Let $h$ be a nonconstant homogeneous polynomial in $\mathbb{C}[z_0,\ldots,z_n]$ and $J_h$ be the Jacobian ideal of $h$. Associated to $h$ is the gradient map
\[
\text{grad}(h) : \mathbb{P}^n \dashrightarrow \mathbb{P}^n, \qquad p \longmapsto \Bigg(\frac{\partial h}{\partial z_0}(p):\cdots: \frac{\partial h}{\partial z_n}(p)\Bigg).
\]
We write $\deg(h)$ to denote the degree of the rational map $\text{grad}(h)$.  %The degree of a rational map is defined to be zero if the rational map is not dominant. 
Theorem \ref{A} depends on a theorem of Dimca and Papadima \cite[Theorem 1]{Dimca-Papadima} expressing $\deg(h)$ as the number of $n$-cells that have to be added to obtain a hypersurface complement from its general hyperplane section.  

\begin{theorem}[Dimca-Papadima]\label{DP}
Let $H \subset \mathbb{P}^n$ be a general hyperplane.
\begin{enumerate}[1.]
\item $D(h)$ is homotopy equivalent to a CW complex obtained from $D(h) \cap H$ by attaching $\deg(h)$ cells of dimension $n$.
\item $V(h) \setminus H$ is homotopic to a bouquet of $\deg(h)$ spheres of dimension $n-1$.
%\item If $h$ is a product of linear forms, then $b_n\big(D(h)\big)=\mathrm{deg\big(grad}(h)\big)$.
\end{enumerate}
\end{theorem}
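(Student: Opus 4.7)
The plan is to construct an explicit polynomial map $f$ on $\mathbb{P}^n \setminus H$ whose critical points in $D(h)$ are in bijection with the $\deg(h)$ preimages of a generic point of $\mathbb{P}^n$ under $\text{grad}(h)$, read off part 1 from the Hamm--L\^e Lefschetz theorem for quasi-projective varieties, and then deduce part 2 from part 1 via a Stein / Euler-characteristic calculation. Let $\ell$ be a linear form defining $H$, set $d = \deg h$, and consider $f := h/\ell^d$ as a regular function on $\mathbb{C}^n = \mathbb{P}^n \setminus H$. The quotient rule gives $df = \ell^{-(d+1)}\bigl(\ell \, dh - d \, h \, d\ell\bigr)$, so at any $p \in D(h) \setminus H$ the equation $df(p) = 0$ amounts to $[\text{grad}(h)(p)] = [\text{grad}(\ell)] \in \mathbb{P}^n$. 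For generic $H$ this target is a generic point of $\mathbb{P}^n$, which pulls back under $\text{grad}(h)$ to exactly $\deg(h)$ simple points, all lying in $D(h) \setminus H$ and giving $\deg(h)$ nondegenerate critical points of $f$.

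For part 1, the variety $D(h)$ is smooth affine of complex dimension $n$, so Andreotti--Frankel gives it the homotopy type of a CW complex of dimension at most $n$, and the Lefschetz hyperplane theorem of Hamm--L\^e for quasi-projective varieties asserts that $(D(h), D(h) \cap H)$ is $(n-1)$-connected for generic $H$. Hence $D(h)$ is obtained from $D(h) \cap H$, up to homotopy, by attaching a finite number $N$ of $n$-cells, with
\[
N = (-1)^n\bigl(\chi(D(h)) - \chi(D(h) \cap H)\bigr) = (-1)^n \chi\bigl(D(h) \setminus H\bigr).
\]
Morse theory on the smooth affine variety $D(h) \setminus H$, applied to a suitable proper perturbation of $\operatorname{Re}(f)$, computes this Euler characteristic: the only critical points are the $\deg(h)$ zeros of $df$ found above, and each has Morse index exactly $n$ because the real part of a holomorphic Morse function at a nondegenerate critical point has signature $(n, n)$. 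Therefore $N = \deg(h)$.

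For part 2, $V(h) \setminus H$ is the complement of the ample Cartier divisor $V(h) \cap H$ in the projective variety $V(h)$, hence is Stein of complex dimension $n-1$ and has the homotopy type of a CW complex of real dimension at most $n-1$ (stratified Andreotti--Frankel). Additivity of the topological Euler characteristic combined with part 1 gives
\[
\chi\bigl(V(h) \setminus H\bigr) = \chi(\mathbb{P}^n) - \chi(\mathbb{P}^{n-1}) - \bigl(\chi(D(h)) - \chi(D(h) \cap H)\bigr) = 1 + (-1)^{n-1}\deg(h).
\]
For $n \geq 3$ a Zariski--Lefschetz argument yields simple connectivity of $V(h) \setminus H$, and together with the dimension bound and the Euler-characteristic count this forces $V(h) \setminus H$ to be homotopy equivalent to a wedge of $\deg(h)$ copies of $S^{n-1}$. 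The cases $n \leq 2$ are elementary (the case $n = 1$ is Example~\ref{mu-one}).

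The main obstacle is applying the Hamm--L\^e Lefschetz theorem and the stratified Andreotti--Frankel result to a possibly singular and reducible $V(h)$, and extracting simple connectivity of $V(h) \setminus H$ for $n \geq 3$. All three rest on the genericity of $H$ via transversality with the Whitney stratification of $V(h)$; this is the technical bridge that converts the clean Morse-theoretic count of part 1 into the bouquet statement of part 2.
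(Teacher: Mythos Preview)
The paper does not prove this theorem; it is quoted from \cite[Theorem~1]{Dimca-Papadima} and used as a black box in the proof of Theorem~\ref{A}. So there is no ``paper's own proof'' to compare against. Your sketch is in fact close in spirit to Dimca and Papadima's original argument, which also hinges on the function $f=h/\ell^{d}$ and the identification of its critical locus with a generic fiber of $\mathrm{grad}(h)$.

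That said, your sketch has a genuine gap in part~2. Knowing that $V(h)\setminus H$ is Stein of complex dimension $n-1$, simply connected, and has Euler characteristic $1+(-1)^{n-1}\deg(h)$ does \emph{not} force it to be a bouquet of $(n-1)$-spheres once $n\ge 4$: a simply connected CW complex of real dimension $\le n-1$ can carry homology in degrees $2,\ldots,n-2$ that cancels in $\chi$. What one actually needs is $(n-2)$-connectedness of $V(h)\setminus H$. Dimca and Papadima obtain this not from a Zariski--Lefschetz statement about $V(h)$ directly, but by identifying $V(h)\setminus H$ with the zero fiber of $f$ on $\mathbb{C}^{n}$ and comparing it to the generic fiber (which is a smooth affine hypersurface, hence $(n-2)$-connected) via a controlled vanishing-cycle argument; the genericity of $H$ guarantees there are no critical points at infinity for $f$, so the only difference between the zero fiber and the generic fiber is the $\deg(h)$ nondegenerate critical points you found. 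Your ``simple connectivity'' claim should be upgraded to this full connectivity statement, and that upgrade is where the real work lies.

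A smaller issue in part~1: ``a suitable proper perturbation of $\mathrm{Re}(f)$'' is doing a lot of unexplained work. The function $\mathrm{Re}(f)$ is not proper on $D(h)\setminus H$, and an arbitrary perturbation could introduce or destroy critical points. The clean way to get the cell count is again to use the tameness of $f$ at infinity for generic $H$: then $f:D(h)\setminus H\to\mathbb{C}^{*}$ is a locally trivial fibration off the finitely many critical values, and the relative homotopy type of $(D(h),D(h)\cap H)$ is read off directly from the $\deg(h)$ Morse singularities of $f$. This is essentially what Dimca--Papadima do, and it simultaneously feeds into part~2.
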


\noindent In particular, $\deg(h)$ depends only on the set $V(h)$. 
%(This verifies a conjecture of Dolgachev \cite[Section 3]{Dolgachev}.) 
Our goal is to identify the mixed multiplicity $\mu^i(h)$ with the degree of the gradient map of a general $i$-dimensional section of $V(h)$. 

\begin{lemma}\label{Lefschetz}
Let $J$ be a homogeneous ideal of a standard graded algebra $S$ over a field. Then, for a sufficiently general linear form $x$ in $S$, $\overline{S}=S/xS$,
\[
\text{HP}_{R(\mathfrak{m} \overline{S}, J \overline{S})}(u,v) = \text{HP}_{R(\mathfrak{m} | J)}(u,v) - \text{HP}_{R(\mathfrak{m} | J)}(u-1,v).
\]
It follows that
\[
e_i(\mathfrak{m}\overline{S}| J \overline{S}) = e_i(\mathfrak{m} | J) \quad \text{for $0 \le i < \deg \text{HP}_{R(\mathfrak{m} | J)}$}.
\]
%where $n$ is the degree of $\text{HP}_{R(\mathfrak{m} | J)}$.
\end{lemma}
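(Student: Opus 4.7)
The plan is to compare the two Hilbert functions bidegree by bidegree via the sequence induced by multiplication by $x$. Write $R = R(\mathfrak{m}|J)$ and $\overline{R} = R(\mathfrak{m}\overline{S}|J\overline{S})$. The surjection $S \twoheadrightarrow \overline{S}$ carries $\mathfrak{m}^u J^v$ onto $\overline{\mathfrak{m}}^u \overline{J}^v$, and thereby induces a surjection
\[
\pi_{u,v}: \mathfrak{m}^u J^v/\mathfrak{m}^{u+1}J^v \twoheadrightarrow \overline{\mathfrak{m}}^u \overline{J}^v/\overline{\mathfrak{m}}^{u+1}\overline{J}^v
\]
with kernel
\[
K_{u,v} = \bigl( (xS \cap \mathfrak{m}^u J^v) + \mathfrak{m}^{u+1}J^v \bigr)/\mathfrak{m}^{u+1}J^v.
\]
The goal is to identify $K_{u,v}$, for large $u$ and $v$, with $\mathfrak{m}^{u-1}J^v/\mathfrak{m}^u J^v$.

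To do so I would invoke the bigraded theory of superficial elements, as developed by Rees and Bhattacharya and worked out in the $\mathbb{N}^2$-graded setting by Trung and Verma, to produce a general linear form $x \in S_1$ with the following two properties for all sufficiently large $(u,v)$: (a) the multigraded Artin-Rees relation $xS \cap \mathfrak{m}^u J^v = x\mathfrak{m}^{u-1}J^v$ holds, and (b) multiplication by $x$ is injective on $\mathfrak{m}^{u-1}J^v/\mathfrak{m}^u J^v$. Both properties reduce to saying that a generic $x$ avoids the finitely many bigraded associated primes of $R$ that lie outside the irrelevant locus, which is a standard prime-avoidance argument for an infinite ground field.

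Granting (a) and (b), the kernel $K_{u,v}$ coincides with the image of the injection
\[
\cdot x : \mathfrak{m}^{u-1}J^v/\mathfrak{m}^u J^v \hookrightarrow \mathfrak{m}^u J^v/\mathfrak{m}^{u+1}J^v,
\]
so $\dim K_{u,v} = \dim \mathfrak{m}^{u-1}J^v/\mathfrak{m}^u J^v$ for large $u,v$. The short exact sequence
\[
0 \to K_{u,v} \to \mathfrak{m}^u J^v/\mathfrak{m}^{u+1}J^v \to \overline{\mathfrak{m}}^u\overline{J}^v/\overline{\mathfrak{m}}^{u+1}\overline{J}^v \to 0
\]
then gives the Hilbert polynomial identity $\text{HP}_{\overline{R}}(u,v) = \text{HP}_{R}(u,v) - \text{HP}_{R}(u-1,v)$.

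The second statement is then a direct expansion. Writing $\text{HP}_{R}(u,v) = \sum_{i=0}^n \frac{e_i}{(n-i)!i!} u^{n-i}v^i + (\text{lower})$ and using $u^{n-i} - (u-1)^{n-i} = (n-i)u^{n-1-i} + (\text{lower in } u)$, the top-degree part of $\text{HP}_{\overline{R}}$ becomes $\sum_{i=0}^{n-1} \frac{e_i}{(n-1-i)!\,i!} u^{n-1-i}v^i$, so $e_i(\mathfrak{m}\overline{S}|J\overline{S}) = e_i(\mathfrak{m}|J)$ for $0 \le i < n$. The main obstacle is the verification of (a) and (b) — the bigraded Artin-Rees relation and the non-zero-divisor property of a generic superficial element — which is the technical heart treated by the cited Trung-Verma machinery.
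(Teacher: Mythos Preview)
Your sketch is correct and is essentially the standard argument via bigraded superficial elements. The paper does not give its own proof here at all: it simply cites \cite[Lemma 1.3]{Trung-Verma}. What you have outlined is precisely the content of that cited lemma, so your approach and the paper's are the same by reference; you have just unpacked the black box. The only point worth flagging is that your prime-avoidance step tacitly assumes the ground field is infinite, which the paper's hypothesis ``sufficiently general linear form'' also implicitly requires.
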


\begin{proof}
See \cite[Lemma 1.3]{Trung-Verma}.
\end{proof}

%Next we show that the process of forming the jacobian ideal and taking a general hyperplane section is numerically compatible. 
Recall that a subideal $I \subseteq J$ is said to be a \emph{reduction} of $J$ if there exists a nonnegative integer $k$ such that $J^{k+1}=IJ^k$. If $J$ is finitely generated, then $I$ is a reduction of $J$ if and only if the integral closures of $I$ and $J$ coincide \cite[Corollary 1.2.5]{Swanson-Huneke}.
%\begin{definition}
%An ideal $I$ is said to be a \emph{reduction} of another ideal $J$ if there exists a nonnegative integer $k$ such that $J^{k+1}=IJ^k$. Equivalently, $I$ is a reduction of $J$ if $J$ is \emph{integral} over $I$, that is,
%\[
%x^k+a_1x^{k-1}+\cdots+a_k=0 \quad \text{for some $a_i \in I^i$}
%\]
%for each $x \in J$.
%\end{definition}
We refer to \cite{Swanson-Huneke} for general background on the reduction and the integral closure of ideals.

\begin{lemma}\label{reduction}
If $I$ is a reduction of $J$, then 
\[
e_i(\mathfrak{m} | I) = e_i(\mathfrak{m} | J) \quad \text{for $0 \le i \le \deg \text{HP}_{R(\mathfrak{m} | J)}$}.
\]
%where $n$ is the degree of $\text{HP}_{R(\mathfrak{m}|J)}$.
\end{lemma}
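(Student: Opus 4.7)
The plan is to compare the length functions $F_K(u,v) := \ell_S\bigl(K^v/\mathfrak{m}^{u+1}K^v\bigr)$ for $K \in \{I, J\}$ by means of a kernel--cokernel analysis, and to show that they share the same top-degree polynomial part. From $J^{k+1}=IJ^k$, induction on $v$ gives $J^{v+k}=I^v J^k\subseteq I^v$; together with the trivial $I^v\subseteq J^v$, this yields
\[
\mathfrak{m}^{u+1}J^{v+k}\subseteq\mathfrak{m}^{u+1}I^v\subseteq\mathfrak{m}^{u+1}J^v
\]
for all $u, v \ge 0$.

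Each $K^v/\mathfrak{m}^{u+1}K^v$ is an $S/\mathfrak{m}^{u+1}$-module, so $F_K$ is finite. Summing $\text{HP}_{R(\mathfrak{m}|K)}(i,v)$ over $i=0,\ldots,u$ and using $\sum_{i=0}^{u}i^{m}=u^{m+1}/(m+1)+(\text{lower})$ shows that for large $u,v$, $F_K$ agrees with a polynomial $P_K(u,v)$ of total degree $n+1$ whose top-degree form is
\[
\sum_{i=0}^{n}\frac{e_i(\mathfrak{m}|K)}{(n+1-i)!\,i!}\,u^{n+1-i}v^i.
\]
The lemma thus reduces to showing $P_I$ and $P_J$ have the same top-degree form.

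The crucial step is to consider the natural map $I^v/\mathfrak{m}^{u+1}I^v\to J^v/\mathfrak{m}^{u+1}J^v$ and to observe that both its kernel $(I^v\cap\mathfrak{m}^{u+1}J^v)/\mathfrak{m}^{u+1}I^v$ and cokernel $J^v/(I^v+\mathfrak{m}^{u+1}J^v)$ are annihilated by $J^k$: the cokernel because $J^k\cdot J^v=J^{v+k}\subseteq I^v$, the kernel because $J^k\cdot\mathfrak{m}^{u+1}J^v=\mathfrak{m}^{u+1}J^{v+k}\subseteq\mathfrak{m}^{u+1}I^v$. Both are therefore modules over $S/J^k$, a ring of dimension $\dim S-\mathrm{ht}(J)\le n$. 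A bivariate Bhattacharya-type result then bounds their lengths, as functions of $(u,v)$, by polynomials of total degree at most $n$. Since $F_I-F_J$ equals the difference of these two lengths, it has total degree at most $n$; hence $P_I$ and $P_J$ share the same degree-$(n+1)$ form, and extracting coefficients yields $e_i(\mathfrak{m}|I)=e_i(\mathfrak{m}|J)$ for every $0\le i\le n=\deg\text{HP}_{R(\mathfrak{m}|J)}$.

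The hard part is the polynomial-growth estimate for the kernel and cokernel. Although annihilation by $J^k$ bounds their supports, obtaining the precise polynomial bound in the bi-index $(u,v)$ requires organizing the family of kernels and cokernels into a single finitely generated bigraded module over an appropriate bigraded algebra built from $S/J^k$, and then invoking bigraded Hilbert polynomial theory to conclude that the polynomial degree is bounded by $\dim S/J^k\le n$.
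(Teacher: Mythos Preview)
The paper does not give a proof; it simply refers to \cite[Corollary~3.8]{Trung}. Your proposal is therefore a direct argument where the paper defers to the literature, and the strategy you outline---sandwich $I^v$ between $J^{v+k}$ and $J^v$, compare $F_I$ and $F_J$ via the kernel and cokernel of the natural map $I^v/\mathfrak{m}^{u+1}I^v \to J^v/\mathfrak{m}^{u+1}J^v$, and observe that both are $S/J^k$-modules---is exactly the standard one underlying Trung's result.

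Two remarks. First, your inequality $\dim S/J^k\le n$ implicitly uses $\mathrm{ht}(J)\ge 1$ together with the identification $n=\dim S-1$ from \cite[Theorem~1.2]{Trung-Verma}. This is satisfied in every application of the lemma in the paper (the Jacobian ideal of a nonconstant form in a polynomial ring has positive height), but the lemma as stated does not carry that hypothesis, and Trung's argument handles the general case by a dimension count directly on the bigraded side rather than through $\dim S/J$. Second, the step you rightly flag as the ``hard part'' is genuinely where the content lies. For the cokernel things are clean: $\bigoplus_v J^v/I^v$ is a finitely generated module over the Rees algebra $\bigoplus_v I^v$ (this is equivalent to the reduction condition), annihilated by $J^k$, and passing to the $\mathfrak{m}$-associated graded yields a finitely generated bigraded $R(\mathfrak{m}|I)$-module supported over $V(J)$, whose Hilbert polynomial then has the required degree bound. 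The kernel is a submodule of $\mathfrak{m}^{u+1}J^v/\mathfrak{m}^{u+1}I^v$, which admits the same treatment. So your sketch is correct under the positive-height assumption, but filling in that last paragraph essentially amounts to reproducing a nontrivial portion of \cite{Trung}.
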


\begin{proof}
See \cite[Corollary 3.8]{Trung}.
\end{proof}

The following lemma is a version of Teissier's idealistic Bertini theorem on families of singular complex spaces \cite[Section 2.2]{Teissier4}. We give a simple proof in our simple setting. For more results of this type, and for the proof of the general case, we refer the reader to \cite{Gaffney1,Gaffney2,Teissier,Teissier4}. Let $h$ be a nonconstant homogeneous polynomial in $S=\mathbb{C}[z_0,\ldots,z_n]$.

\begin{lemma}\label{chain}
Let $x$ be a nonzero linear form in $S$, $\overline{S}=S/xS$, and $J_{\overline{h}}$ be the Jacobian ideal of the class of $h$ in $\overline{S}$. Then, for a sufficiently general $x$, $J_{\overline{h}}$ is a reduction of $J_h \overline{S}$.
\end{lemma}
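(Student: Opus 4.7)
After a generic linear change of coordinates, I may assume $x = z_0$, so $\overline{S} = \mathbb{C}[z_1, \ldots, z_n]$ and $\overline{h}(z_1, \ldots, z_n) = h(0, z_1, \ldots, z_n)$. Since $\overline{\partial h/\partial z_i} = \partial \overline{h}/\partial z_i$ for each $i \ge 1$,
\[
J_h\overline{S} \,=\, J_{\overline{h}} + \bigl(\overline{\partial h/\partial z_0}\bigr),
\]
so the task reduces to showing that the extra generator $\overline{\partial h/\partial z_0}$ is integral over $J_{\overline{h}}$ in $\overline{S}$.

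The plan is to compare projective degrees via Remark \ref{projective degree} and then invoke Rees's criterion that, for $\mathfrak{m}$-primary ideals in a Cohen--Macaulay local ring, equal Hilbert--Samuel multiplicities imply reduction. The two ideals correspond to the gradient maps
\[
\phi_2 = \text{grad}(h)|_H : H \dashrightarrow \mathbb{P}^n, \qquad \phi_1 = \text{grad}(\overline{h}) : H \dashrightarrow \mathbb{P}^{n-1},
\]
where $H = \{z_0 = 0\} \cong \mathbb{P}^{n-1}$, and these are intertwined by the linear projection $\pi : \mathbb{P}^n \dashrightarrow \mathbb{P}^{n-1}$ from $p_0 = [1:0:\cdots:0]$: $\phi_1 = \pi \circ \phi_2$.

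For sufficiently general $x$, the hyperplane $H$ is transverse to $V(h)$ away from $\text{Sing}(V(h))$---since the dual variety of $V(h)$ is a proper subvariety of $(\mathbb{P}^n)^\vee$---so that the projection point $p_0$ misses the image $Y_2 = \overline{\phi_2(H)} \subseteq \mathbb{P}^n$, while $Y_1 = \overline{\phi_1(H)}$ equals $\mathbb{P}^{n-1}$ generically. By Remark \ref{projective degree}, the top mixed multiplicities are
\[
e(J_h\overline{S}) = \deg(\phi_2)\cdot \deg(Y_2), \qquad e(J_{\overline{h}}) = \deg(\phi_1).
\]
The line-fibers of $\pi$ meet the hypersurface $Y_2$ in $\deg(Y_2)$ points by B\'ezout, so $\deg(\pi|_{Y_2}) = \deg(Y_2)$; combined with $\deg(\phi_1) = \deg(\pi|_{Y_2})\cdot \deg(\phi_2)$, this yields $e(J_h\overline{S}) = e(J_{\overline{h}})$ at the irrelevant ideal, and Rees's theorem gives reduction there.

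The main obstacle is extending this multiplicity argument to the finitely many other minimal primes of $V(J_{\overline{h}})$, which arise from singular points of $V(h)\cap H$. At each such prime the same projective-degree calculation, carried out in the localization and combined with B\"oger's extension of Rees's criterion to the non-$\mathfrak{m}$-primary setting, completes the reduction locally, and hence globally in $\overline{S}$.
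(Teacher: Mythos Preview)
Your approach differs from the paper's and has a real gap at the decisive step.

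The paper argues directly and briefly. By the chain rule, $J_{\overline{h}}$ is generated by the restrictions to $\overline{S}$ of a \emph{hyperplane} in the span $W$ of the partials $\partial_i h$, and this hyperplane ranges over a dense open set of $\mathbb{P}(W^*)$ as $x$ varies. Since $\dim\overline{S}=n$, the fiber ring $\mathcal{F}_{J_h\overline{S}}=\bigoplus_k (J_h\overline{S})^k/\mathfrak{m}(J_h\overline{S})^k$ has dimension at most $n$; graded Noether normalization then says that a general $n$-dimensional subspace of its degree-one piece generates a subring over which $\mathcal{F}_{J_h\overline{S}}$ is integral, which is exactly the reduction statement. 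No multiplicities are compared.

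Your route via Rees's criterion breaks down because Rees requires $\mathfrak{m}$-primary ideals, and $J_{\overline{h}}$, $J_h\overline{S}$ are $\mathfrak{m}$-primary only when $V(h)\cap H$ is smooth. You see this and appeal to B\"oger's extension, but B\"oger requires the smaller ideal to be equimultiple, meaning $\mathrm{ht}(J_{\overline{h}})$ equals its analytic spread. You never verify this, and it can fail: the analytic spread is $\dim Y_1+1$, while the height is the codimension of $\mathrm{Sing}(V(\overline{h}))$, and these are unrelated for $h$ with large singular locus. The closing sentence about repeating ``the same projective-degree calculation'' in each localization is not a proof---projective degrees are global invariants of a rational map, not localizable multiplicities---so the singular case is left genuinely open. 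There are smaller gaps earlier as well: your transversality argument yields $p_0\notin\phi_2(H)$ but not $p_0\notin\overline{\phi_2(H)}=Y_2$, and the assertion $Y_1=\mathbb{P}^{n-1}$ is unsupported.
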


\begin{proof}
It suffices to prove when the partial derivatives of $h$ are linearly independent. Let $V$ be the vector space of linear forms in $S$ and let $W$ be the vector space spanned by the partial derivatives of $h$. If $x$ is a linear form $c_0z_0+\cdots+c_nz_n$ with $c_n \neq 0$, then $\overline{S}$ is the polynomial ring generated by the classes of $z_0,\ldots,z_{n-1}$. By the chain rule, $J_{\overline{h}}$ is generated by the restrictions of the polynomials
\[
c_n \frac{\partial h}{\partial z_i} - c_i \frac{\partial h}{\partial z_n}, \qquad 0 \le i < n.
\]
This identifies an affine piece of the projective space of lines in $V$ with an affine piece of the projective space of hyperplanes in $W$.
We claim that the image in $\overline{S}$ of a general hyperplane in $W$ generates a reduction of $J_h \overline{S}$.

In general, let $R = \bigoplus_{k \in \mathbb{N}} R_k$ be a standard graded ring of dimension $n$ over an infinite field and $J$ be an ideal generated by a subspace $L$ of dimension $\ge n$ in some $R_k$. Then a sufficiently general subspace of dimension $n$ in $L$ generates a reduction $I$ of $J$. To see this, consider the graded map between the fiber rings
\[
\mathcal{F}_I = \bigoplus_{k \in \mathbb{N}} \frac{I^k}{\mathfrak{m}I^k} \longrightarrow \mathcal{F}_J = \bigoplus_{k \in \mathbb{N}} \frac{J^k}{\mathfrak{m}J^k}.
\]
One shows that $J$ is integral over $I$ in the ideal-theoretic sense iff $\mathcal{F}_J$ is integral over $\mathcal{F}_I$ in the ring-theoretic sense \cite[Proposition 8.2.4]{Swanson-Huneke}. The conclusion follows from the graded Noether normalization theorem applied to $\mathcal{F}_J$ because the dimension of $\mathcal{F}_J$ is at most $n$ \cite[Proposition 5.1.6]{Swanson-Huneke}. This technique is due to Samuel \cite{Samuel} and Northcott-Rees \cite{Northcott-Rees}.
\end{proof}

\begin{proof}[Proof of Theorem \ref{A}]
We induct on the dimension $n$. The case $n=1$ is dealt with in Example \ref{mu-one}. For general $n$, let $x \in \mathfrak{m}$ be a sufficiently general linear form, $\overline{S}=S/xS$, and $\overline{h}$ be the image of $h$ in $\overline{S}$. The induction hypothesis applied to $\overline{h}$ says that the assertions for $i<n$ hold with $\mu^i(\overline{h})$ in place of $\mu^i(h)$. However, for $i<n$, we have
\[
\begin{array}{llll}\vspace{2mm}
\mu^i(\overline{h}) &=& e_i\big(\mathfrak{m}\overline{S} | J_{\overline{h}}\big) & \hspace{15mm} \text{by definition}, \\ \vspace{2mm}
&=& e_i\big(\mathfrak{m}\overline{S} | J_h \overline{S}\big) & \hspace{15mm} \text{by Lemma \ref{reduction} and Lemma \ref{chain}}, \\ \vspace{2mm}
&=& e_i\big(\mathfrak{m}|J_h\big) & \hspace{15mm} \text{by Lemma \ref{Lefschetz}},\\
&=& \mu^i(h) & \hspace{15mm} \text{by definition}. 
\end{array}
\]
This proves the assertions for $i<n$. For $i=n$, we recall from Remark \ref{projective degree} that $\mu^n(h)$ is the $n$-th projective degree of $\text{grad}(h)$. Since the target of $\text{grad}(h)$ is $\mathbb{P}^n$, the $n$-th projective degree equals $\deg(h)$, the degree of the rational map $\text{grad}(h)$. With this identification, Theorem \ref{DP} says that the assertions hold for $i=n$.
\end{proof}

\subsection{Proof of Theorem \ref{bound}}

\begin{lemma}\label{monoton}
Let $S$ be a standard graded ring of dimension $n+1$ over a field, $\mathfrak{m}$ the irrelevant ideal of $S$.
\begin{enumerate}[1.]
\item Suppose $I$ and $J$ are ideals of positive height, both generated by elements of the same degree $r \ge 0$. If $I \subseteq J$, then
\[
e_i(\mathfrak{m} | I) \le e_i(\mathfrak{m} | J) \qquad \text{for $i=0,\ldots,n$}.
\]
\item Suppose $I$ and $J$ are $\mathfrak{m}$-primary ideals. If $I \subseteq J$, then
\[
e_i(\mathfrak{m} | I) \ge e_i(\mathfrak{m} | J) \qquad \text{for $i=0,\ldots,n$}.
\]
\end{enumerate}
\end{lemma}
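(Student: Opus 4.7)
The plan is to handle the two parts separately, since they have rather different flavor: Part 2 (the $\mathfrak{m}$-primary case) is classical and follows from monotonicity of the mixed Hilbert-Samuel multiplicity under enlargement of an $\mathfrak{m}$-primary slot, while Part 1 (the equigenerated positive-height case) rests on the projective-degree interpretation of Remark \ref{projective degree}.

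For Part 2, I would invoke the classical identification (cf.\ Remark \ref{TRS}) that for $\mathfrak{m}$-primary $J$ the mixed multiplicity $e_i(\mathfrak{m}|J)$ agrees with the mixed Hilbert-Samuel multiplicity $e(\mathfrak{m}^{[n+1-i]}, J^{[i]})$ of the $(n+1)$-tuple consisting of $n+1-i$ copies of $\mathfrak{m}$ and $i$ copies of $J$. The well-known monotonicity of mixed Hilbert-Samuel multiplicity under replacing one $\mathfrak{m}$-primary slot by a larger $\mathfrak{m}$-primary ideal then yields $e_i(\mathfrak{m}|I) \ge e_i(\mathfrak{m}|J)$ by substituting $I \mapsto J$ in the $i$ non-$\mathfrak{m}$ slots one at a time. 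Equivalently, one could start from $\mathfrak{m}^u I^v \subseteq \mathfrak{m}^u J^v$, apply the usual monotonicity of the Hilbert-Samuel multiplicity to get $e(\mathfrak{m}^u I^v) \ge e(\mathfrak{m}^u J^v)$, and extract the leading coefficients of these polynomials in $u, v$.

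For Part 1, I would write $I = (f_0, \ldots, f_m)$ and $J = (f_0, \ldots, f_m, g_1, \ldots, g_k)$, all of common degree $r$. Setting $X = \text{Proj}(S)$, the rational maps $\varphi_I : X \dashrightarrow \mathbb{P}^m$ and $\varphi_J : X \dashrightarrow \mathbb{P}^{m+k}$ satisfy $\varphi_I = \pi \circ \varphi_J$, where $\pi : \mathbb{P}^{m+k} \dashrightarrow \mathbb{P}^m$ is the linear projection forgetting the last $k$ coordinates. On any $x \in X$ where $\varphi_J$ is defined, the induced map on graphs sending $(x, \varphi_J(x))$ to $(x, \varphi_I(x))$ is a bijection, so $(\text{id}_X \times \pi)_*[\Gamma_J] = [\Gamma_I]$ in $A_k(X \times \mathbb{P}^m)$. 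Expanding both classes via Remark \ref{projective degree} and invoking the standard pushforward rule $\pi_*[\mathbb{P}^i_{\mathbb{P}^{m+k}}] = [\mathbb{P}^i_{\mathbb{P}^m}]$ for $i \le m$ and $0$ otherwise, I would conclude $e_i(\mathfrak{m}|I) = e_i(\mathfrak{m}|J)$ for $i \le m$ and $e_i(\mathfrak{m}|I) = 0 \le e_i(\mathfrak{m}|J)$ for $i > m$, which establishes the desired inequality.

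The main obstacle will be making the pushforward identity in Part 1 rigorous when $S$ is not a domain or when $\varphi_J$ has a nontrivial base locus; one must either resolve the indeterminacy of $\varphi_J$ by blowing up $J$ or verify transversality on a dense open subset of $\Gamma_J$. These are standard but slightly tedious technicalities. Part 2 is essentially immediate once the correct monotonicity statement for mixed Hilbert-Samuel multiplicities of $\mathfrak{m}$-primary graded ideals is in place.
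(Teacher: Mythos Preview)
Your argument for Part 2 is essentially fine and matches the spirit of the paper's proof, which also proceeds from the colength inequality $\dim S/\mathfrak{m}^u I^v \ge \dim S/\mathfrak{m}^u J^v$.

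Your argument for Part 1, however, contains a genuine error. The conclusion you draw---that $e_i(\mathfrak{m}|I)=e_i(\mathfrak{m}|J)$ for all $i\le m$---is simply false. Take $S=k[x,y,z]$, $I=(x^2,xy)$, $J=(x^2,xy,y^2)$; here $m=1$, but $e_1(\mathfrak{m}|I)=1$ while $e_1(\mathfrak{m}|J)=2$. The flaw is in the pushforward step: the linear projection $\pi:\mathbb{P}^{m+k}\dashrightarrow\mathbb{P}^m$ is only a rational map, and $\Gamma_J$ typically meets the center $\mathbb{P}^n\times\{y_0=\cdots=y_m=0\}$. Indeed, over any point $p$ with $p\in V(I)\setminus V(J)$ one has $(p,\varphi_J(p))\in\Gamma_J$ lying in that center, so $\mathrm{id}\times\pi$ is not a morphism on $\Gamma_J$ and there is no pushforward to invoke. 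If you resolve the indeterminacy, the pullback of the hyperplane class from $\mathbb{P}^m$ differs from that coming from $\mathbb{P}^{m+k}$ by an \emph{effective} exceptional divisor; intersecting against nef classes then yields only the inequality $e_i(\mathfrak{m}|I)\le e_i(\mathfrak{m}|J)$, never equality in general. That corrected geometric argument is valid but is not what you wrote.

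The paper avoids these issues entirely. Because $I$ and $J$ are generated in the same degree $r$, the quotient $\mathfrak{m}^u I^v/\mathfrak{m}^{u+1}I^v$ is concentrated in degree $u+rv$ and equals $S_u\cdot(I^v)_{rv}$, which sits inside $S_u\cdot(J^v)_{rv}=\mathfrak{m}^u J^v/\mathfrak{m}^{u+1}J^v$. This gives a termwise inequality of Hilbert functions, from which the paper extracts the top mixed multiplicity by letting $u/v\to 0$ and obtains the remaining ones by induction on $\dim S$ via a general hyperplane section (Lemma~\ref{Lefschetz}).
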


\begin{proof}
If $I$ and $J$ are ideals of positive height, then $\deg \text{HP}_{R(\mathfrak{m}|I)} = \deg \text{HP}_{R(\mathfrak{m}|J)} =n$ \cite[Theorem 1.2]{Trung-Verma}. We prove the first part by induction on $n$. Since $I$ and $J$ are generated by elements of the same degree,
\[
\dim_{S/\mathfrak{m}} \frac{\mathfrak{m}^u I^v}{\mathfrak{m}^{u+1} I^v} \le \dim_{S/\mathfrak{m}} \frac{\mathfrak{m}^u J^v}{\mathfrak{m}^{u+1} J^v} \qquad \text{for $u, v \ge 0$}
\]
because the former vector space is contained in the latter. %It follows that
%\[
%HP_{R(\mathfrak{m}|I)}(u,v) \le HP_{R(\mathfrak{m}|J)}(u,v) \quad \text{for large $u$ and large $v$}.
%\]
If $n=1$, then the above condition implies that
\[
e_0(\mathfrak{m} | I)u+e_1(\mathfrak{m} | I)v \le e_0(\mathfrak{m} | J)u+e_1(\mathfrak{m} | J)v \quad \text{for all large $u$ and large $v$}
\]
and hence 
\[
e_i(\mathfrak{m}|I) \le e_i(\mathfrak{m} |J) \quad \text{for $i=0,1$}.
\]
Now suppose $n>1$ and assume the induction hypothesis. We know that
\[
\sum_{i=0}^n \frac{e_i(\mathfrak{m}|I)}{(n-i)! i!} u^{n-i} v^{i} \le \sum_{i=0}^n \frac{e_i(\mathfrak{m}|J)}{(n-i)! i!} u^{n-i} v^{i} \quad \text{for all large $u$ and large $v$}.
\]
Taking the limit $u/v \to 0$ while keeping $u$ and $v$ sufficiently large, we have
\[
e_n(\mathfrak{m} | I) \le e_n(\mathfrak{m} | J).
\]
For the remaining cases, choose a sufficiently general linear form $x$ in  $S$. Using Lemma \ref{Lefschetz} and the induction hypothesis for the triple $\overline{S}=S/xS$, $I\overline{S}$, $J\overline{S}$, we have
\[
e_i(\mathfrak{m}|I) = e_i(\mathfrak{m}\overline{S}|I\overline{S}) \le e_i(\mathfrak{m}\overline{S}|J\overline{S})  = e_i(\mathfrak{m}|I) \quad \text{for $i=0,\ldots,n-1$}.
\]

The second part can be proved in the same way except that one starts from
\[
\dim_{S/\mathfrak{m}} S/\mathfrak{m}^{u} I^v \ge \dim_{S/\mathfrak{m}} S/\mathfrak{m}^{u} J^v \qquad \text{for $u, v \ge 0$}.
\]
\end{proof}

\begin{proof}[Proof of Theorem \ref{bound}]
The first inequality is clear in view of Theorem \ref{A}. For the second inequality, note that $J_h$ is contained in the ideal generated by all the monomials appearing in one of the partial derivatives of $h$. Let us denote this latter ideal by $K_h$. By Lemma \ref{monoton}, $e_i(\mathfrak{m}|J_h) \le e_i(\mathfrak{m}|K_h)$ holds for $0 \le i \le n$. Computing $e_i(\mathfrak{m}|K_h)$ from the function
\[
\dim_\mathbb{C} \mathfrak{m}^{p} K_h^{q}/\mathfrak{m}^{p + 1} K_h^{q}, \quad p=u+\sum_{j=1}^{n-i} v_j, \quad q=\sum_{k=1}^i v_{n-i+k}
\]
shows that
\[
e_i(\mathfrak{m}|K_h) = e_{(0,1,\ldots,1)}(\mathfrak{m}|\underbrace{\mathfrak{m},\ldots,\mathfrak{m}}_{n-i},\underbrace{K_h,\ldots,K_h}_{i}) \quad \text{for $0 \le i \le n$}. 
\]
Using Theorem \ref{mixed_volume} to identify the right-hand side with the mixed volume of interest, we have
\[
\mu^i(h) \le \text{MV}_n(\underbrace{\Delta,\ldots,\Delta}_{n-i}, \underbrace{\Delta_h,\ldots,\Delta_h}_i)  \quad \text{for $0 \le i \le n$}. 
\]
\end{proof}

\section{Representable homology classes of $\mathbb{P}^n \times \mathbb{P}^m$}\label{LastSection}

\subsection{Proof of Theorem \ref{B}}

The main ingredient of Theorem \ref{B} is the Hodge-Teissier-Khovanskii inequality \cite{Khovanskii,Teissier3}. See also the presentations \cite{Gromov} and \cite[Section 1.6]{Lazarsfeld}. We introduce another proof using Okounkov bodies, blind to the characteristic and to the singularities, which is more akin to the convex geometric viewpoint of this paper. The proof closely follows the argument of \cite{Kaveh-Khovanskii}. First we recall the necessary facts on Okounkov bodies from \cite{Lazarsfeld-Mustata}. Let $D$ be a divisor on an $n$-dimensional algebraic variety $X$. The \emph{Okounkov body} of $D$, denoted $\Delta(D)$, is a compact convex subset of $\mathbb{R}^n$ with the following properties.

\begin{enumerate}[A.]
\item If $H$ is an ample\footnote{The first equality holds more generally for big divisors \cite[Theorem 2.3]{Lazarsfeld-Mustata}.} divisor on $X$, then
\[
n! \hspace{1mm} \text{V}_n\big(\Delta(H)\big) = \lim_{k \to \infty} \frac{h^0\big( X,\mathcal{O}_X(kH)\big)}{k^n/ n!} = \int_X \underbrace{H \cdot H \cdot \ldots \cdot H}_n.
\]
\item For any divisors $D_1$ and $D_2$ on $X$,
\[
\Delta(D_1) + \Delta(D_2) \subseteq \Delta(D_1+D_2).
\]
\end{enumerate}

\begin{lemma}[Teissier-Khovanskii]\label{TK}
Let $H_1,\ldots,H_n$ be nef divisors on an $n$-dimensional variety $X$. Then
\[
\Big(\int_X H_1 \cdot H_1 \cdot H_3 \cdot \ldots \cdot H_n \Big) \Big(\int_X H_2 \cdot H_2 \cdot H_3 \cdot \ldots \cdot H_n\Big) \le \Big(\int_X H_1 \cdot H_2 \cdot H_3 \cdot \ldots \cdot H_n \Big)^2.
\]
\end{lemma}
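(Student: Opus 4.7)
The plan is to reduce Teissier--Khovanskii to the classical Hodge index inequality on a surface, and then to derive Hodge index from Brunn--Minkowski applied to Okounkov bodies, following the spirit of Kaveh--Khovanskii. Throughout I would work over an algebraically closed field and use only the two properties A and B of Okounkov bodies recalled above.

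First I would reduce to the case where $X$ is smooth and projective and each $H_i$ is ample. Nef divisors are limits of ample $\mathbb{Q}$-divisors in the real N\'eron--Severi space, and intersection numbers depend continuously on the $H_i$, so perturbing $H_i \mapsto H_i + \varepsilon A$ for a fixed ample $A$ and letting $\varepsilon \downarrow 0$ passes the (non-strict) inequality from the ample case to the nef case. Likewise, passing to a resolution of singularities pulls nef back to nef and preserves top-degree intersection numbers, so I may assume $X$ is smooth.

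Next, I would cut $X$ down to a surface. For a sufficiently divisible positive integer $m$, Bertini's theorem supplies smooth irreducible divisors $B_i \in |mH_i|$ for $i=3,\ldots,n$ meeting transversely, and their common intersection $S := B_3 \cap \cdots \cap B_n$ is a smooth projective surface on which $L_j := H_j|_S$ is ample for $j=1,2$. The projection formula gives
\[
m^{n-2} \int_X H_i \cdot H_j \cdot H_3 \cdots H_n = \int_S L_i \cdot L_j \qquad \text{for $i,j \in \{1,2\}$},
\]
so the desired inequality reduces to the Hodge index inequality $(L_1^2)(L_2^2) \le (L_1 \cdot L_2)^2$ on $S$. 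For this, fix an admissible flag on $S$ and form the Okounkov bodies $\Delta(L_1)$, $\Delta(L_2)$, $\Delta(L_1+L_2) \subset \mathbb{R}^2$. Property A identifies $\text{V}_2(\Delta(L)) = \tfrac{1}{2}\int_S L^2$ for every ample $L$, and property B gives the inclusion $\Delta(L_1) + \Delta(L_2) \subseteq \Delta(L_1+L_2)$. Planar Brunn--Minkowski
\[
\text{V}_2(\Delta(L_1+L_2))^{1/2} \ge \text{V}_2(\Delta(L_1))^{1/2} + \text{V}_2(\Delta(L_2))^{1/2}
\]
then yields, after squaring and expanding $(L_1+L_2)^2 = L_1^2 + 2\, L_1\cdot L_2 + L_2^2$, the estimate $L_1 \cdot L_2 \ge \sqrt{(L_1^2)(L_2^2)}$, which is exactly Hodge index.

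I expect the main obstacle to be the surface cut-down step. The Bertini reduction requires enough ampleness that the successive hyperplane sections remain smooth and irreducible and that the restrictions $L_j = H_j|_S$ stay ample; working with $mH_i$ for sufficiently divisible $m$ and exploiting smoothness of $X$ should handle this, but one must carefully check that the Okounkov bodies of the restrictions are genuinely two-dimensional and of the expected volume, so that property A really computes the self-intersections in question. The ample-to-nef limit is a clean continuity argument; once the reductions are in place, the convex geometry of Okounkov bodies supplies Hodge index, and hence Teissier--Khovanskii, essentially at once.
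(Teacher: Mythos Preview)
Your proposal is correct and follows essentially the same route as the paper: pass from nef to ample by a limit argument (Kleiman), cut down to a surface via Bertini, and then on the surface derive the Hodge-type inequality from Brunn--Minkowski applied to Okounkov bodies using properties A and B. The paper organizes the Bertini step as an induction on $n$, slicing by one very ample $H_n$ at a time, rather than intersecting $B_3,\ldots,B_n$ all at once, but this is only a cosmetic difference.

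The one point worth flagging is your insertion of a resolution of singularities and your insistence on \emph{smooth} Bertini sections. The paper deliberately avoids both: it advertises a proof ``blind to the characteristic and to the singularities,'' and indeed properties A and B of Okounkov bodies hold on an arbitrary (possibly singular) projective variety, while Bertini for \emph{irreducibility} (Jouanolou) is valid in any characteristic. Thus the successive hyperplane sections need only be varieties, not smooth ones, and no resolution is required. Your detour is harmless over $\mathbb{C}$ but forfeits the positive-characteristic case, where resolution is not known in general and Bertini for smoothness can fail. Finally, your worry about the two-dimensionality of $\Delta(L_j)$ is unfounded: for ample $L$ on a surface, property A gives $2\,\mathrm{V}_2\big(\Delta(L)\big) = \int_S L^2 > 0$, so the body has full dimension automatically.
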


\begin{proof}
The proof is by induction on $n$. By Kleiman's theorem \cite[Theorem 1.4.23]{Lazarsfeld}, $H_i$ are the limits of rational ample divisor classes in the N\'eron-Severi space of $X$. Therefore it suffices to prove the inequality for rational ample divisor classes. Because of the homogeneity of the stated inequality, we may further assume that the $H_i$ are very ample integral divisors. If $n \ge 3$, then Bertini's theorem \cite[Corollary 6.11]{Jouanolou} allows us to apply the inductive hypothesis to $H_n$, which is a subvariety of $X$. Therefore we are reduced to the case of surfaces.

When $X$ is a surface and $H_1,H_2$ are ample divisors on $X$, by the Brunn-Minkowski inequality \cite[Theorem 6.1.1]{Schneider} and Property B above, we have
\[
\text{V}_2\big(\Delta(H_1)\big)^{1/2}+\text{V}_2\big(\Delta(H_2)\big)^{1/2} \le \text{V}_2\big(\Delta(H_1)+\Delta(H_2)\big)^{1/2} \le \text{V}_2\big(\Delta(H_1+H_2)\big)^{1/2}.
\]
Now Property A says that the square of the above inequality simplifies to
\[
\Big(\int_X H_1 \cdot H_1 \Big) \Big(\int_X H_2 \cdot H_2\Big) \le \Big(\int_X H_1 \cdot H_2 \Big)^2.
\]
\end{proof}

\begin{lemma}\label{no internal zeros}
Let $\mathscr{C} \subset \mathbb{R}^{n+1}$ be the set of all log-concave sequences of positive real numbers, and let $\mathscr{C}' \subset \mathbb{R}^{n+1}$ be the set of all log-concave sequences of nonnegative real numbers with no internal zeros. Then $\overline{\mathscr{C}} = \mathscr{C}'$.
\end{lemma}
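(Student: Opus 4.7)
The plan is to establish the two inclusions $\overline{\mathscr{C}} \subseteq \mathscr{C}'$ and $\mathscr{C}' \subseteq \overline{\mathscr{C}}$ separately.

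For $\overline{\mathscr{C}} \subseteq \mathscr{C}'$, suppose $(a_i) = \lim_{m \to \infty}(a_i^{(m)})$ with $(a_i^{(m)}) \in \mathscr{C}$. Nonnegativity of the entries and the log-concavity inequality $a_{i-1}a_{i+1} \le a_i^2$ both pass to the limit. For the no-internal-zeros condition, I would exploit that $\log a_i^{(m)}$ is a discrete concave function of $i$, so that for any indices $j < l < k$,
\[
a_l^{(m)} \ge \big(a_j^{(m)}\big)^{(k-l)/(k-j)} \, \big(a_k^{(m)}\big)^{(l-j)/(k-j)}.
\]
Letting $m \to \infty$, whenever $a_j>0$ and $a_k>0$ one gets $a_l>0$, which means the support of $(a_i)$ is a consecutive interval.

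For $\mathscr{C}' \subseteq \overline{\mathscr{C}}$, given $(a_0,\ldots,a_n)\in\mathscr{C}'$ with support $[p,q]\subseteq\{0,\ldots,n\}$, I would construct an explicit approximation by extending $(a_p,\ldots,a_q)$ log-linearly outside $[p,q]$ with slope governed by a parameter $\epsilon\in(0,1)$:
\[
a_i^{(\epsilon)} = \begin{cases} a_i & \text{if } p \le i \le q,\\ \epsilon^{\,p-i}\, a_p & \text{if } i < p,\\ \epsilon^{\,i-q}\, a_q & \text{if } i > q. \end{cases}
\]
All entries are positive and $a_i^{(\epsilon)} \to a_i$ as $\epsilon \to 0$. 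Log-concavity follows from a short case analysis: inside $[p,q]$ it is the hypothesis on $(a_i)$; for any three consecutive indices that lie entirely outside $[p,q]$, or that straddle an endpoint with two of them outside, the defining inequality holds with equality by the log-linear construction; the only binding constraints on $\epsilon$ occur at $i=p$ and $i=q$, where the inequalities $\epsilon\, a_p a_{p+1} \le a_p^2$ and $\epsilon\, a_q a_{q-1} \le a_q^2$ reduce to $\epsilon \le \min(a_p/a_{p+1},\, a_q/a_{q-1})$. Hence $(a_i) \in \overline{\mathscr{C}}$.

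The argument contains no substantive obstacle; the only point requiring any care is the boundary log-concavity check at $i=p$ and $i=q$, which is handled by choosing $\epsilon$ small enough to dominate the finitely many ratios above.
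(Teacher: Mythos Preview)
Your proof is correct and follows essentially the same approach as the paper: for $\overline{\mathscr{C}}\subseteq\mathscr{C}'$ the paper proves the same interpolation inequality $e_i^{k-j}e_k^{j-i}\le e_j^{k-i}$ (by induction rather than by invoking concavity of $\log a_i$), and for $\mathscr{C}'\subseteq\overline{\mathscr{C}}$ the paper uses the explicit approximation $\ldots,\epsilon^5,\epsilon^3,\epsilon,a_1,\ldots,a_k,\epsilon,\epsilon^3,\epsilon^5,\ldots$, which is a minor variant of your log-linear extension. The only cosmetic difference is the choice of geometric tail; both require $\epsilon$ small relative to the same finitely many boundary ratios.
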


\begin{proof}
We first show that a sequence $(e_0,e_1,\ldots,e_n) \in \mathscr{C}$ satisfies
\[
e_i^{k-j} e_k^{j-i} \le e_j^{k-i} \quad \text{for $0 \le i < j < k \le n$}.
\]
This can be shown by induction on $k-i$. %the base case is exactly the assumption that
%\[
%e_j^2 \ge e_{j-1}e_{j+1}.
%\]
By the induction hypothesis, we have
\[
e_i^{k-j-1}e_{k-1}^{j-i} \le e_j^{k-i-1} \quad \text{and} \quad
e_j e_k^{k-j-1} \le e_{k-1}^{k-j}.
\]
Eliminating $e_{k-1}$ in the first inequality using the second inequality, and using the assumption that $e_j$ is positive, we have what we want. The proved inequality shows that
\[
\overline{\mathscr{C}} \subseteq \mathscr{C}'.
\]
The reverse inclusion is easy to see. A log-concave sequence of nonnegative numbers with no internal zeros can be written as
\[
\ldots,0,0,0,a_1,a_2,\ldots,a_k,0,0,0,\ldots
\]
for some positive numbers $a_i$. This sequence is the limit of sequences in $\mathscr{C}$ of the form
\[
\ldots,\epsilon^5,\epsilon^3,\epsilon,a_1,a_2,\ldots,a_k,\epsilon,\epsilon^3,\epsilon^5,\ldots
\]
for sufficiently small positive $\epsilon$.
\end{proof}

We deal with the four exceptional cases in a separate lemma.

\begin{lemma}\label{exceptional}
If $\xi \in A_*(\mathbb{P}^n \times \mathbb{P}^m)$ is an integer multiple of either
$[\mathbb{P}^n \times \mathbb{P}^m], [\mathbb{P}^n \times \mathbb{P}^0], [\mathbb{P}^0 \times \mathbb{P}^m]$, or $[\mathbb{P}^0 \times \mathbb{P}^0]$,
then $\xi$ is representable iff the integer is $1$.
\end{lemma}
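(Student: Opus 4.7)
The plan is to handle each of the four classes separately. The ``if'' direction is immediate: $[\mathbb{P}^n \times \mathbb{P}^m]$, $[\mathbb{P}^n \times \mathbb{P}^0]$, $[\mathbb{P}^0 \times \mathbb{P}^m]$, and $[\mathbb{P}^0 \times \mathbb{P}^0]$ are visibly represented by the whole space, a fiber of $\pi_2$, a fiber of $\pi_1$, and a single point respectively. So I focus on the ``only if'' direction.

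The cases $\xi = e[\mathbb{P}^n \times \mathbb{P}^m]$ and $\xi = e[\mathbb{P}^0 \times \mathbb{P}^0]$ will be dispatched by a bare dimension count. A representing irreducible subvariety in the first case has dimension $n+m$, so it must equal the irreducible ambient space $\mathbb{P}^n \times \mathbb{P}^m$; in the second case it has dimension $0$, so it must be a single reduced point. In both cases $e=1$ is forced.

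The remaining two cases are interchanged by swapping the two factors, so I will argue only $\xi = e[\mathbb{P}^n \times \mathbb{P}^0]$. Let $Z$ be an irreducible subvariety with $[Z] = e[\mathbb{P}^n \times \mathbb{P}^0]$; then $\dim Z = n$. My goal is to show that the projection $\pi_2|_Z : Z \to \mathbb{P}^m$ has zero-dimensional image $\{p\}$. Once this is established, $Z$ is an $n$-dimensional irreducible subvariety of the $n$-dimensional irreducible variety $\mathbb{P}^n \times \{p\}$, so $Z = \mathbb{P}^n \times \{p\}$, giving $[Z] = [\mathbb{P}^n \times \mathbb{P}^0]$ and therefore $e=1$.

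For the claim that $\dim \pi_2(Z) = 0$, I will use the Chow ring presentation $A_*(\mathbb{P}^n \times \mathbb{P}^m) = \mathbb{Z}[a,b]/(a^{n+1}, b^{m+1})$, in which $[Z] = e\,b^m$ and hence $[Z] \cdot b = e\,b^{m+1} = 0$. If $\dim \pi_2(Z) \ge 1$, then for a sufficiently general hyperplane $H \subset \mathbb{P}^m$ we have $\pi_2(Z) \not\subseteq H$, so the intersection $Z \cap (\mathbb{P}^n \times H)$ is non-empty and of the expected codimension one in $Z$. Its class equals $[Z] \cdot b$ and should therefore vanish, contradicting the fact that every non-empty effective cycle has non-zero class in $\mathbb{P}^n \times \mathbb{P}^m$ (being a non-negative integral combination of the basis elements $[\mathbb{P}^{n-i} \times \mathbb{P}^{m-j}]$ with at least one positive coefficient). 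The main obstacle is the dimension-of-intersection step, which I expect to follow directly from Krull's principal ideal theorem once $H$ is chosen generically.
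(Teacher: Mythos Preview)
Your proof is correct and takes essentially the same approach as the paper: both argue via $[Z]\cdot b = 0$ that a subvariety $Z$ representing $e[\mathbb{P}^n \times \mathbb{P}^0]$ must have $\dim \pi_2(Z) = 0$, hence $Z = \mathbb{P}^n \times \{p\}$ and $e=1$. The paper phrases the last step as ``$Z$ misses a general $\mathbb{P}^n \times \mathbb{P}^{m-1}$, so $Z \subset \mathbb{P}^n \times \mathbb{A}^m$ and the projective-to-affine projection is constant,'' which is the same idea as your ``a positive-dimensional image would meet every hyperplane.''
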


\begin{proof}
The interesting part is to prove the necessity when $\xi$ is an integer multiple of $[\mathbb{P}^n \times \mathbb{P}^0]$ or $[\mathbb{P}^0 \times \mathbb{P}^m]$. It is enough to consider the first case. If $\xi$ is represented by a subvariety $Z$, then a general hypersurface $H$ of the form $\mathbb{P}^n \times \mathbb{P}^{m-1} \subset \mathbb{P}^n \times \mathbb{P}^{m}$ is disjoint from $Z$, since otherwise the intersection $H\cap Z$ defines a nonzero class in $A_{n-1}(\mathbb{P}^n \times \mathbb{P}^{m-1})$. Therefore $Z$ is in fact a subvariety of $\mathbb{P}^n \times \mathbb{C}^m$. Since $Z$ is a projective variety, the map $Z \to \mathbb{C}^m$ induced by the second projection $\mathbb{P}^n \times \mathbb{C}^m \to \mathbb{C}^m$ is constant \cite[Corollary 5.2.2, Chapter I]{Shafarevich}. It follows that $Z$ is of the form $\mathbb{P}^n \times \mathbb{P}^0$.
\end{proof}

Lastly, we need the following mixed volume computation of Shephard \cite[pp. 134--136]{Shephard}.

\begin{lemma}[Shephard]\label{Shephard}
For positive numbers $\lambda_1 \ge \lambda_2 \ge \cdots \ge \lambda_n$, define the polytope
\[
\Delta_\mathbf{\lambda} = \text{conv}\big\{\mathbf{0},\lambda_1 \mathbf{e}_1,\lambda_2 \mathbf{e}_2,\ldots,\lambda_n \mathbf{e}_n\big\} \subset \mathbb{R}^n.
\]
If $\Delta \subset \mathbb{R}^n$ is the standard $n$-dimensional simplex, then 
\[
\text{MV}_n(\underbrace{\Delta,\ldots,\Delta}_{n-i},\underbrace{\Delta_\mathbf{\lambda},\ldots,\Delta_\mathbf{\lambda}}_i)
= \lambda_1 \lambda_2 \cdots \lambda_i \quad \text{for $0 \le i \le n$}.
\]
\end{lemma}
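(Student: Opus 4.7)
The plan is to prove Shephard's formula via the Bernstein--Kouchnirenko (BKK) theorem, inducting on the dimension $n$. By the continuity of mixed volumes in the Hausdorff metric and the scaling identity $\Delta_{c\lambda}=c\Delta_\lambda$ (under which both sides of the formula are homogeneous of degree $i$ in the $\lambda_j$'s), I would first reduce to the case where the $\lambda_j$'s are positive integers with $\lambda_1\ge\cdots\ge\lambda_n$. Under this assumption, BKK identifies $\text{MV}_n(\Delta^{n-i},\Delta_\lambda^i)$ with the number of common zeros in $(\mathbb{C}^*)^n$ of a generic system consisting of $n-i$ affine-linear Laurent polynomials $f_k=c_{k,0}+c_{k,1}z_1+\cdots+c_{k,n}z_n$ (Newton polytope $\Delta$) and $i$ Laurent polynomials $g_k=d_{k,0}+d_{k,1}z_1^{\lambda_1}+\cdots+d_{k,n}z_n^{\lambda_n}$ (Newton polytope $\Delta_\lambda$).

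The induction is on $n$ with $i$ fixed. The base case $n=i$ is immediate: $\text{MV}_i(\Delta_\lambda^i)=i!\cdot V_i(\Delta_\lambda)=\lambda_1\cdots\lambda_i$, since $\Delta_\lambda$ is an $i$-simplex of volume $\lambda_1\cdots\lambda_i/i!$. For the inductive step $n>i$, I would solve one linear equation, say $f_1=0$, for $z_n=-(c_{1,0}+c_{1,1}z_1+\cdots+c_{1,n-1}z_{n-1})/c_{1,n}$, and substitute into all remaining equations. The other $n-i-1$ linear equations stay linear, while each $g_k$ becomes a Laurent polynomial $\tilde g_k$ in $z_1,\ldots,z_{n-1}$ whose Newton polytope is the convex hull of $\{0,\lambda_1 e_1,\ldots,\lambda_{n-1}e_{n-1}\}$ (from the surviving $z_j^{\lambda_j}$ for $j<n$) together with $\lambda_n\Delta^{(n-1)}=\text{conv}\{0,\lambda_n e_1,\ldots,\lambda_n e_{n-1}\}$ (from the $\lambda_n$-th power of the substituted linear expression). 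Here the ordering hypothesis enters crucially: since $\lambda_n\le\lambda_j$ for each $j<n$, the point $\lambda_n e_j$ lies on the segment $[0,\lambda_j e_j]\subset\Delta_{\lambda'}$, where $\lambda'=(\lambda_1,\ldots,\lambda_{n-1})$, so $\lambda_n\Delta^{(n-1)}\subseteq\Delta_{\lambda'}$ and the Newton polytope of $\tilde g_k$ is exactly $\Delta_{\lambda'}$. Applying BKK in dimension $n-1$ and the inductive hypothesis to the reduced system then yields $\text{MV}_{n-1}(\Delta^{n-1-i},\Delta_{\lambda'}^i)=\lambda_1\cdots\lambda_i$, closing the induction.

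The main technical point will be to verify that a generic choice of coefficients in the original system induces a sufficiently generic reduced system so that BKK continues to apply in dimension $n-1$, and that the solution count is preserved by the substitution (that is, $z_n\ne 0$ at generic solutions, ensuring that the correspondence $(\mathbb{C}^*)^n\to(\mathbb{C}^*)^{n-1}$ is faithful); both points follow from standard transversality arguments. The combinatorial content of the ordering assumption $\lambda_1\ge\cdots\ge\lambda_n$ is precisely that it preserves the shape of the relevant Newton polytope under dimension reduction, which is what makes the induction go through.
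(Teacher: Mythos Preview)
The paper does not actually prove this lemma; it simply cites Shephard's 1960 computation. So there is no in-paper argument to compare yours against, and a self-contained proof via BKK is a perfectly reasonable thing to supply.

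Your base case and your identification of the Newton polytope of $\tilde g_k$ are correct, and the substitution does give one direction cleanly: a generic original system has exactly $\text{MV}_n(\Delta^{n-i},\Delta_\lambda^i)$ torus solutions, these inject into the torus solutions of the reduced system, and Bernstein's \emph{upper} bound for the reduced system (which requires no genericity) together with the inductive hypothesis yields
\[
\text{MV}_n(\Delta^{n-i},\Delta_\lambda^i)\ \le\ \text{MV}_{n-1}(\Delta^{\,n-1-i},\Delta_{\lambda'}^{\,i})\ =\ \lambda_1\cdots\lambda_i.
\]
The gap is in the opposite inequality. You assert that a generic original system yields a ``sufficiently generic'' reduced system so that BKK applies with equality in dimension $n-1$. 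But once $f_1$ (hence $\ell$) is fixed, each $\tilde g_k$ moves only in the $(n{+}1)$-dimensional linear family
\[
d_{k,0}+\sum_{j=1}^{n-1} d_{k,j}z_j^{\lambda_j}+d_{k,n}\,\ell(z)^{\lambda_n},
\]
so the entire degree-$\le\lambda_n$ part of $\tilde g_k$ is a scalar multiple of the single fixed polynomial $\ell^{\lambda_n}$. The map from original to reduced coefficients is therefore far from dominant, and Bernstein's nondegeneracy criterion for the constrained reduced family does not follow from a generic transversality statement; it would have to be checked face by face. This is not impossible, but it is not the routine step you describe.

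A clean repair that preserves your overall architecture: keep the substitution argument for the upper bound only, and get the matching lower bound by monotonicity of mixed volumes. Since the segment $[0,e_{i+k}]$ is contained in $\Delta$ for $k=1,\dots,n-i$, one has
\[
\text{MV}_n(\Delta^{n-i},\Delta_\lambda^i)\ \ge\ \text{MV}_n\big([0,e_{i+1}],\dots,[0,e_n],\Delta_\lambda^{\,i}\big)
\ =\ \text{MV}_i\big(\pi(\Delta_\lambda)^i\big)
\ =\ i!\,V_i\big(\Delta_{(\lambda_1,\dots,\lambda_i)}\big)
\ =\ \lambda_1\cdots\lambda_i,
\]
where $\pi:\mathbb{R}^n\to\mathbb{R}^i$ forgets the last $n-i$ coordinates and the middle equality is the standard reduction formula for mixed volumes with segment arguments. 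This closes the induction without any appeal to genericity of the reduced system.
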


\begin{proof}[Proof of Theorem \ref{B}]

The case when $\xi\in A_*(\mathbb{P}^{n} \times \mathbb{P}^{m})$ is an integer multiple of either $[\mathbb{P}^{n} \times \mathbb{P}^{m}], [\mathbb{P}^{n} \times \mathbb{P}^{0}],[\mathbb{P}^{0} \times \mathbb{P}^{m}]$, or $[\mathbb{P}^{0} \times \mathbb{P}^{0}]$ is dealt with in Lemma \ref{exceptional}. Hereafter we assume that this is not the case. Let $\xi$ be the homology class
\[
\xi = \sum_i e_i \big[\mathbb{P}^{k-i} \times \mathbb{P}^{i}\big] \in A_k(\mathbb{P}^n \times \mathbb{P}^m),
\]
where the term containing $e_i$ is zero if $n<k-i$ or $m<i$. We write $H_1$ and $H_2$ for divisors on $\mathbb{P}^n \times \mathbb{P}^m$ obtained by pulling back a hyperplane from the first and the second factor respectively. 

\vspace{2mm}

\noindent 1. Suppose $\xi$ is represented by a subvariety $Z$. Then
\[
\int_{Z} \underbrace{H_1 \cdot \ldots \cdot H_1}_{k-i} \cdot \underbrace{H_2 \cdot \ldots \cdot H_2}_{i} = e_i.
\]
Since $H_1$ and $H_2$ are nef, Lemma \ref{TK} says that the $e_i$ form a log-concave sequence.

\vspace{2mm}

\noindent 2. We continue to assume that $\xi$ is represented by a subvariety $Z$. By Kleiman's theorem, $H_1|_Z$ and $H_2|_Z$ are limits of ample classes in the N\'eron-Severi space of $Z$ over $\mathbb{R}$ \cite[Theorem 1.4.23]{Lazarsfeld}. Therefore the sequence $\{e_i\}$ is a limit of log-concave sequences of positive real numbers. It follows from Lemma \ref{no internal zeros} that $\{e_i\}$ is a log-concave sequence of nonnegative numbers with no internal zeros. $\{e_i\}$ cannot be identically zero because, for example, $\sum_i {k \choose i} e_i$ is the degree of $Z$ inside the Segre embedding $\mathbb{P}^n \times \mathbb{P}^m \subset \mathbb{P}^{nm+n+m}$ \cite[Exercise 19.2]{Harris}.

\vspace{2mm}

\noindent 3. Now we show that the condition on the sequence is sufficient for the representability of a multiple of the corresponding homology class.
First we represent a multiple of $\xi \in A_n(\mathbb{P}^n \times \mathbb{P}^n)$, $n>0$, corresponding to a log-concave sequence of \emph{positive} integers $e_i$. Write
\[
\xi = \sum_{i=0}^n e_i \big[\mathbb{P}^{n-i} \times \mathbb{P}^{i} \big] \in A_n(\mathbb{P}^n \times \mathbb{P}^n).
\]
Let $e$ be a common multiple of $e_0,\ldots,e_{n-1}$ and define positive integers 
\[
\lambda_i = e (e_i/e_{i-1}) \quad \text{for $i=1,\ldots,n$}.
\]
The log-concavity of $e_i$ now reads $\lambda_1 \ge  \lambda_2 \ge \cdots \ge \lambda_{n}$. Let $\Gamma_\mathbf{\lambda}$ be the graph of the rational map 
\[
\varphi_\mathbf{\lambda} : \mathbb{P}^n \dashrightarrow \mathbb{P}^n, \quad 
 (z_0:\cdots:z_n) \longmapsto (z_0^{\lambda_1} : z_1^{\lambda_1} : z_0^{\lambda_1-\lambda_2} z_2^{\lambda_2}:\cdots : z_0^{\lambda_1-\lambda_n} z_n^{\lambda_n}).
\]
The projective degrees of $\varphi_\mathbf{\lambda}$ are the mixed multiplicities of the irrelevant ideal $\mathfrak{m}$ and the monomial ideal $J_\lambda$ generated by the components of $\varphi_\mathbf{\lambda}$. Combining Theorem \ref{mixed_volume} and Lemma \ref{Shephard}, we have
\[
e_i(\mathfrak{m}|J_\mathbf{\lambda})% &=& e_{(0,1,\ldots,1)}(\mathfrak{m} | \underbrace{\mathfrak{m},\ldots,\mathfrak{m}}_{n-i},\underbrace{J_\mathbf{\lambda},\ldots,J_\mathbf{\lambda}}_i)\\
= \text{MV}_n(\underbrace{\Delta,\ldots,\Delta}_{n-i},\underbrace{\Delta_\mathbf{\lambda},\ldots,\Delta_\mathbf{\lambda}}_i)
= \lambda_1 \lambda_2 \cdots \lambda_i = e^i(e_i/e_0),
\]
where $\Delta_\mathbf{\lambda}$ is the polytope of Lemma \ref{Shephard}. In other words, 
\[
\int_{\Gamma_\mathbf{\lambda} } \underbrace{H_1 \cdot \ldots \cdot H_1}_{n-i} \cdot \underbrace{H_2 \cdot \ldots \cdot H_2}_{i} = e^i(e_i/e_0).
\]
Now consider a regular map $\psi : \mathbb{P}^n \to \mathbb{P}^n$ defined by homogeneous polynomials $h_0,\ldots,h_n$ of degree $e$ with no common zeros. If the $h_i$ are chosen in a sufficiently general way, then the product $\psi \times \text{Id}_{\mathbb{P}^n} : \mathbb{P}^n \times \mathbb{P}^n \to \mathbb{P}^n \times \mathbb{P}^n$ restricts to a birational morphism $\Gamma_{\mathbf{\lambda}} \to \text{Im}(\Gamma_{\mathbf{\lambda}})$. From the projection formula we have
\[
\int_{\text{Im}(\Gamma_\mathbf{\lambda})} \underbrace{H_1 \cdot \ldots \cdot H_1}_{n-i} \cdot \underbrace{H_2 \cdot \ldots \cdot H_2}_{i} =\int_{\Gamma_\mathbf{\lambda}} \underbrace{e H_1 \cdot \ldots \cdot e H_1}_{n-i} \cdot \underbrace{H_2 \cdot \ldots \cdot H_2}_{i}  = e^n(e_i/e_0).
\]
In sum, $\text{Im}(\Gamma_{\mathbf{\lambda}}) \subset \mathbb{P}^n \times \mathbb{P}^n$ is irreducible and
\[
\big[\text{Im}(\Gamma_\mathbf{\lambda})\big] = (e^n/e_0) \sum_{i=0}^n e_i \big[\mathbb{P}^{n-i} \times \mathbb{P}^i \big] \in A_n(\mathbb{P}^n \times \mathbb{P}^n).
\]

\vspace{2mm}

\noindent 4. Finally, we represent a positive integer multiple of $\xi \in A_k(\mathbb{P}^n \times \mathbb{P}^m)$ corresponding to a nonzero log-concave sequence of nonnegative integers with no internal zeros. $\xi$ can be uniquely written as an integral linear combination
\[
\xi = \sum_{i=p}^q e_i \big[ \mathbb{P}^{k-i} \times \mathbb{P}^{i}\big] \in A_k(\mathbb{P}^n \times \mathbb{P}^m),
\]
where $0 \le p \le q \le k$, $k-p \le n$, $q \le m$, and $e_p, e_q$ are positive. 

If $p=q$, then either $0<p<m$ or $0<k-p<n$, since we are excluding the four exceptional cases. If $0<p<m$, take a hypersurface $Z$ in $\mathbb{P}^{k-p} \times \mathbb{P}^{p+1}$ defined by an irreducible bihomogeneous polynomial of degree $(0,e_p)$. Then the image of $Z$ under an embedding of $\mathbb{P}^{k-p} \times \mathbb{P}^{p+1}$ into  $\mathbb{P}^{n} \times \mathbb{P}^{m}$ represents $\xi$. Here $0<p$ guarantees the existence of the irreducible polynomial and $p<m$ guarantees the existence of the embedding. Similarly, if $0<k-p<n$, we take a hypersurface $Z$ in $\mathbb{P}^{k-p+1} \times \mathbb{P}^{p}$ defined by an irreducible bihomogeneous polynomial of degree $(e_p,0)$. The image of $Z$ under an embedding of $\mathbb{P}^{k-p+1} \times \mathbb{P}^{p}$ into $\mathbb{P}^{n} \times \mathbb{P}^{m}$ represents $\xi$. 

If $p<q$, then we can use the result of the previous step to choose a $(q-p)$-dimensional subvariety $Z \subset \mathbb{P}^{q-p} \times \mathbb{P}^{q-p}$ representing a multiple of 
\[
\sum_{i=p}^{q} e_{i} \big[\mathbb{P}^{q-i} \times \mathbb{P}^{i-p}\big] \in A_{q-p}(\mathbb{P}^{q-p} \times \mathbb{P}^{q-p}).
\]
Embed $\mathbb{P}^{q-p} \times \mathbb{P}^{q-p}$ into $\mathbb{P}^{k-p} \times \mathbb{P}^{q}$ and take the cone $\widetilde{Z}$ of $Z$ in $\mathbb{P}^{k-p} \times \mathbb{P}^{q}$. The cone $\widetilde{Z}$ is defined by the same bihomogeneous polynomials defining $Z$, hence irreducible, and represents a multiple of
\[
\sum_{i=p}^q e_i \big[ \mathbb{P}^{k-i} \times \mathbb{P}^{i}\big] \in A_k(\mathbb{P}^{k-p} \times \mathbb{P}^{q}).
\]
The image of $\widetilde Z$ under an embedding of  $\mathbb{P}^{k-p} \times \mathbb{P}^{q}$ into $\mathbb{P}^{n} \times \mathbb{P}^{m}$ represents a multiple of $\xi$.
\end{proof}

\section*{Acknowledgments}

The author would like to express deep gratitude to Professor Heisuke Hironaka for invaluable lessons on mathematics in general. He thanks Professor Hal Schenck for answering numerous questions on arrangements of hyperplanes and Professor Bernard Teissier for helpful comments on the general idealistic Bertini theorem. He also thanks the referees for useful suggestions.

\end{document}